\numberwithin{equation}{section}
\theoremstyle{plain}
\newtheorem{theorem}{Theorem}[section]
\newtheorem*{theorem*}{Theorem}
\newtheorem{lemma}[theorem]{Lemma}
\newtheorem{proposition}[theorem]{Proposition}
\newtheorem*{proposition*}{Proposition}
\newtheorem*{conjecture*}{Conjecture}
\newtheorem*{assumption*}{Assumption}
\theoremstyle{definition}
\newtheorem{definition}[theorem]{Definition}
\newtheorem{example}[theorem]{Example}
\newtheorem{algorithm}{Algorithm}[]
\newtheorem{experiment}{Experiment}[]
\theoremstyle{remark}
\newtheorem{remark}[theorem]{Remark}
\newtheorem*{remark*}{Remark}
\newcommand{\abs}[1]{\left\lvert #1 \right\rvert}
\newcommand{\R}{\mathbb{R}}
\begin{document}
\title[Network area reconstruction]{Blockage detection in networks:
  the area reconstruction method}

\author[E.~Bl{\aa}sten]{Emilia Bl{\aa}sten$^{1,2,3}$}
\email{emilia.blasten@iki.fi \textnormal{(corresponding author)}}
\author[F.~Zouari]{Fedi Zouari$^1$}
\email{zouari.fedi@gmail.com}
\author[M.~Louati]{Moez Louati$^1$}
\email{mzlouati@gmail.com}
\author[M.S.~Ghidaoui]{Mohamed S. Ghidaoui$^1$}
\email{ghidaoui@ust.hk}

\address{$^1$\textit{Department of Civil and Environmental
    Engineering, Hong Kong University of Science and Technology, Hong
    Kong}}

\address{$^2$\textit{Jockey Club Institute for Advanced Study, Hong
    Kong University of Science and Technology, Hong Kong}}

\address{$^3$\textit{Department of Mathematics and Statistics,
    University of Helsinki, Finland}}

\begin{abstract}
  In this note we present a reconstructive algorithm for solving the
  cross-sectional pipe area from boundary measurements in a tree
  network with one inaccessbile end. This is equivalent to
  reconstructing the first order perturbation to a wave equation on a
  quantum graph from boundary measurements at all network ends except
  one. The method presented here is based on a time reversal boundary
  control method originally presented by Sondhi and Gopinath for one
  dimensional problems and later by Oksanen to higher dimensional
  manifolds. The algorithm is local, so is applicable to complicated
  networks if we are interested only in a part isomorphic to a
  tree. Moreover the numerical implementation requires only one matrix
  inversion or least squares minimization per discretization point in
  the physical network. We present a theoretical solution existence
  proof, a step-by-step algorithm, and a numerical implementation
  applied to two numerical experiments.
\end{abstract}

\keywords{ \textbf{Impulse response, blockage detection, transient
    flow, area reconstruction, network, boundary control,
    reconstruction algorithm} }

\maketitle

\section{Introduction}

We present a reconstruction algorithm and its numerical implementation
for solving for the pipe cross-sectional area in a water supply
network from boundary measurements. Mathematically this is modelled by
the frictionless waterhammer equations on a quantum graph, with
Kirchhoff's law and the law of continuity on junctions. The
waterhammer equations on a segment $P = {({0,\ell})}$ are given by
\cite{Ghidaoui2005,Wylie1993}
\begin{align}
  &\partial_t H(t,x) = - \frac{a^2(x)}{g A(x)} \partial_x Q(t,x), &&
  t\in\R, x\in P,\label{introWH1}\\ &\partial_t Q(t,x) = - g A(x)
  \partial_x H(t,x), && t\in\R, x\in P,\label{introWH2}
\end{align}
where
\begin{itemize}
\item $H$ is the hydraulic pressure (piezometric head) inside the pipe
  $P$, with dimensions of length,
\item $Q$ is the pipe discharge or flow rate in the direction of
  increasing $x$. Its dimension is length$^3$/time,
\item $a$ is the wave speed in length/time,
\item $g$ is the graviational acceleration in length/time$^2$,
\item $A$ is the pipe's internal cross-sectional area in length$^2$.
\end{itemize}
We model the network by a set of segments whose ends have been joined
together. The segments model pipes. In this model we first fix a
positive direction of flow on each pipe $P_j$, and set the coordinates
${({0,\ell_j})}$ on it. Then we impose \cref{introWH1,introWH2} on the
segments. On the vertices, which model junctions, we require that
$H(t,x)$ has a unique limit no matter which direction the point $x$
tends to the vertex. Moreover on any vertex $V$ we require that
\begin{equation} \label{kirchhoff}
  \sum_{P_j \text{ connected to } V} \nu_j Q_j = 0
\end{equation}
where $\nu_j \in \{+1,-1\}$ gives the \emph{direction of the internal
  normal vector in coordinates $x$} to pipe $P_j$ at $V$, and $Q_j$ is
the boundary flow of pipe $P_j$ at vertex $V$. In other words $\nu_j
Q_j$ is the flow \emph{into} the pipe $P_j$ at the vertex $V$. Hence
the sum of the flows into the pipes at each vertex must be zero.
Therefore there are no sinks or sources at the junctions, and also
none in the network in general, except for the ones at the boundary of
the network that are used to create our input flows for the
measurements.

\medskip
The inverse problem we are set to solve is the following one. We
assume that the wave speed $a(x)=a$ is constant. If not, see
\cref{discussion}. Given a tree network with $N+1$ ends
$x_0,x_1,\ldots, x_N$, we can set the flow and measure the pressure on
these points except for $x=x_0$. Using this information, can we deduce
$A(x)$ for $x$ inside the network?

The problem of finding pipe areas arise in systems such as water
supply networks and pressurized sewers, due to the formation of
blockages during their lifetime \cite{area1,Tolstoy2010}. In water
supply pipes, blockages increase energy consumption and increase the
potential of water contamination. Blockages in sewer pipes increase
the risk of overflows in waste-water collection systems and impose a
risk to public health and environment. Detection of these anomalies
improves the effectiveness of pipe replacement and maintenance, and
hence improves environmental health.

An analogous and important problem is fault detection in electric
cables and feed networks. This problem can be solved in the same way
as blockage detection because of the analogy between waterhammer
equations and Telegrapher’s equations \cite{Jing2018}.

Mathematically the simplest network is a segment joining $x_1$ to
$x_0$. In this setting, the problem formulated as above was solved by
\cite{Sondhi--Gopinath}. Various other algorithms were developed for
solving the one-segment problem both before and after. See
\cite{Bruckstein1985} for a review. After a while some mathematicians
started focusing on the network situation and others into higher
dimensional manifolds. Others continued on tree networks, which are
also the setting of this manuscript. For tree networks Belishev's
boundary control method \cite{Belishev-network-paper1,
  Belishev-network-paper2} showed that a tree network can be
completely reconstructed after having access to all boundary
points. These results were improved more recently by Avdonin and
Kurasov \cite{Kurasov--tree}. A unified approach that uses Carleman
estimates and is applicable to various equations on tree networks was
introduced by Baudouin and Yamamoto \cite{Baudouin--Yamamoto}. Also,
\cite{LeafPeeling} implies that it is possible to solve the problem in
the presence of various different boundary conditions when doing the
measurements. A network having loops presents various challenges to
solving for the cross-sectional area from boundary measurements
\cite{Belishev--loops}. In \cite{Kurasov--tree} the authors
furthermore show that all but one boundary vertex is enough
information, and also that if the network topology is known a-priori,
then the simpler backscattering data is enough to solve the inverse
problem. This latter data is easier to measure: the pressure needs to
be measured only at the same network end from which the flow pulse is
sent.

Our paper has two goals: 1) to solve the inverse problem by a
reconstruction algorithm, and 2) to have a simple and concrete
algorithm that is easy to implement on a computer. In other words we
focus on the reconstruction aspect of the problem, given the full
necessary data, and show that it is possible to build an efficient
numerical algorithm for reconstruction. From the point of view of
implementing the reconstruction, the papers cited above are of various
levels of difficulty, and we admit that this is partly a question of
experience and opinion. We view that apart from
\cite{Sondhi--Gopinath}, all of them focus much on the important
theoretical foundations, but lack in the clarity of algorithmic
presentation for non-experts. Therefore one of the major points of
this text is a clearly defined algorithm whose implementation does not
require understanding its theoretical foundations. See
\cref{algoSect}.

Our method is based on the ideas of \cite{Sondhi--Gopinath,Oksanen}
and is a continuation of our investigation of the single pipe case
\cite{area1}. In both of them the main point is to use \emph{boundary
  control} to produce a wave that would induce a constant pressure in
a \emph{domain of influence} at a certain given time. A simple
integration by parts gives then the total volume covered by that
domain of influence. \emph{Time reversal} is the main tool which
allows us to build that wave without knowing a-priori what is inside
the pipe network.

A numerical implemetation and proof of a working regularization scheme
for \cite{Oksanen} in one space dimension was shown in
\cite{Oksanen--regularization}. Earlier, we studied
\cite{Sondhi--Gopinath} and tested it numerically
in the context of blockage detection in water supply pipes
\cite{area1} and also experimentally. We also showed that the method can be further
extended to detect other types of faults such as leaks
\cite{area3}. The one space-dimensional inversion algorithm can be
implemented more simply than in
\cite{Oksanen,Oksanen--regularization}, even in the case of
networks. This is the main contribution of this paper. Furthermore we
provide a step-by-step numerical implementation and present numerical
experiments. Our reconstruction algorithm is local and time-optimal.
In other words if we are interested in only part of the network, we
can do measurements on only part of the boundary. Furthermore the
algorithm uses time-measurements just long enough to recover the part
of interest. Intuitively, to reconstruct the area $A(x)$ at a location
$x$, the wave must reach the location $x$ and reflects back to the
measurement location. Any measurement done on a shorter time-interval
will not be enough to recover it.

\section{Governing Equations}
\subsection*{Networks}
Denote by $\mathbb G$ a tree network. Let $\mathbb J$ be the set of
internal junction points and $\mathbb P$ the set of pipes, or
segments. The boundary $\partial\mathbb G$ consists of all ends of
pipes that are not junctions of two or more pipes. Each of them
belongs to a single pipe unlike junction points which belong to three
or more. There are no junction points that are connected to exactly
two pipes: these two pipes are considered as one, and the point
between is just an ordinary internal point.

Each pipe $P\in\mathbb P$ is modelled by a segment $({0,\ell})$ where
$\ell$ is the length of the pipe. This defines a direction of positive
flow on the pipe, namely from $x=0$ towards $x=\ell$. The pipes are
connected to each other by junction points. Write $(x,P) \sim v$ if
$v\in\mathbb J$, $x$ is the beginning ($x=0$) or endpoint ($x=\ell$)
of pipe $P\in\mathbb P$, and the latter is connected to $v$ at the
beginning ($x=0$) or end ($x=\ell$), respectively.

\begin{example} \label{ex1}
  \begin{figure}
    \centering
    \includegraphics{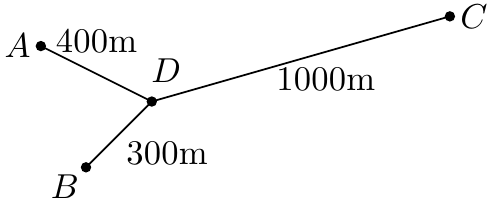}
    \caption{A simple network.}
    \label{fig1}
  \end{figure}
  An example network is depicted in \cref{fig1}. Here $\mathbb J =
  \{D\}$, $\partial\mathbb G = \{A,B,C\}$, $\mathbb P = \{AD, BD,
  DC\}$. Moreover here is a coordinate representation
  \begin{align*}
    AD = ({0,\SI{400}{\m}})&, &BD = ({0,\SI{300}{\m}})&, &DC =
    ({0,\SI{1000}{\m}})&, \\ (0, AD) \sim A&, & (0, BD) \sim B&, & (0,
    DC) \sim D&,\\ (\SI{400}{\m}, AD) \sim D&, & (\SI{300}{\m}, BD)
    \sim D&, & (\SI{1000}{\m}, DC) \sim C&.
  \end{align*}
\end{example}

\begin{example} \label{ex2}
  \cref{ex1} can be implemented numerically as follows. In total there
  are four vertices and three pipes. If vector indexing starts with
  $1$ we number points as $C=1$, $A=2$, $B=3$, $D=4$ and pipes as
  $AD=1$, $BD=2$, $DC=3$. The adjacency matrix \verb#Adj# is defined
  so as pipe number \verb#i# goes from vertex number \verb#Adj(i,1)#
  to vertex number \verb#Adj(i,2)#.
\begin{lstlisting}
  V = 4; % number of vertices
  L   = [400; 300; 1000]; % length of pipes
  Adj = [2 4; 3 4; 4 1]; % adjacency matrix
\end{lstlisting}
\end{example}

\subsection*{Equations}
Inside pipe $P$ the perturbed pressure head $H$ and cross-sectional
discharge $Q$ satisfy
\begin{align}
  &\partial_t H(t,x) = - \frac{a^2}{g A(x)} \partial_x Q(t,x), &&
  t\in\R, \quad x\in P,\label{WH1}\\ &\partial_t Q(t,x) = - g A(x)
  \partial_x H(t,x), && t\in\R, \quad x\in P,\label{WH2}
\end{align}
where $a,A,g$ denote the wave speed, cross-sectional area and
gravitational acceleration. The sign of $Q$ is chosen so that $Q>0$
means the flow goes from $0$ to $\ell$. In \cref{ex1} the positive
flow goes from $A$ to $D$, from $B$ to $D$ and from $D$ to $C$. The
wave speed is assumed constant.

The pressure is a scalar: if $v\in\mathbb J$ connects two or more
pipes $(x_j,P_j) \sim v$, $j=1,2,\ldots$, then $H$ has a unique value
at $v$
\begin{equation} \label{scalarH}
  \lim_{\substack{x\to x_j\\x\in P_j}} H(t,x) = \lim_{\substack{x\to
      x_k\\x\in P_k}} H(t,x), \qquad t\in\R, \quad (x_j,P_j) \sim v
  \sim (x_k,P_k).
\end{equation}
The flow satisfies mass conservation, i.e. a condition analogous to
Kirchhoff's law. The total flow into a junction must be equal to the
total flow out of the junction at any time. To state this as an
equation we define the \emph{internal normal vector} $\nu$ for any
pipe end. If the pipe $P$ has coordinate representation ${(0,\ell)}$
then $\nu(0) = +1$ and $\nu(\ell) = -1$. Recall that $Q>0$ mean a
positive flow from the direction of $0$ to $\ell$. This means that
$\nu(x)Q(t,x)$ is the flow \emph{into} the pipe at point
$x\in\{0,\ell\}$. If it is positive then there is a net flow of water
into $P$ through $x$. If it is negative then there is a net flow out
of $P$ through $x$. Mass conservation is then written as
\begin{equation} \label{Kirchhoff}
  \sum_{(x,P)\sim v} \nu(x) Q(t,x) = 0, \qquad t\in\R, \quad
  v\in\mathbb J.
\end{equation}

\subsection*{Initial conditions}
The model assumes unperturbed initial conditions
\begin{equation}\label{zeroInitial}
  H(t,x)=Q(t,x)=0, \qquad t<0, \quad x\in P
\end{equation}
for any pipe $P\in\mathbb P$.

\subsection*{Direct problem}
In this section we define the behaviour of the pressure $H$ and pipe
cross-sectional discharge $Q$ in the network given a boundary flow and
network structure. This is the direct problem. Recall that we have one
inaccessible end of the network, $x_0$, whose boundary condition must
be an inactive one, i.e. must not create waves when there are no
incident waves. To make the problem mathematically well defined we
choose for example \cref{x0BC}. Other options would work too, for
example involving derivatives of $H$ or $Q$, but it is questionable
how physically realistic such an arbitrary boundary condition is. The
main point is that any inactive condition at the inaccessible end is
allowed without risking the reconstruction. This is because the
theoretical waves used in the calculations of our reconstruction
algorithm never have to reach this final vertex, and so the boundary
condition there never has a chance to modify reflected waves.
\begin{definition} \label{model}
  We say that $H$ and $Q$ \emph{satisfy the network wave model with
    boundary flow $F : \R\times\partial\mathbb G\setminus\{x_0\} \to
    \R$} if $F(t,x)=0$ for $t<0$ and $H,Q$ satisfy \cref{WH1,WH2}, the
  junction conditions of \cref{scalarH,Kirchhoff} and the initial
  conditions of \cref{zeroInitial}. Furthermore $Q$ must satisfy the
  boundary conditions
  \begin{align}
    &\nu(x)Q(t,x)=F(t,x), &&\qquad x\in\partial\mathbb G, x\neq x_0,
    t\in\R \\ &A(t) Q(t,x) + B(t) H(t,x) = 0, &&\qquad x=x_0,
    t\in\R \label{x0BC}
  \end{align}  
  for some given functions or constants $A(t),B(t)$ that we do not
  need to know.
\end{definition}
\begin{remark*}
  Note that $\nu Q = F$ implies that $F$ is the flow \emph{into} the
  network. If $F>0$ fluid enters, and if $F<0$ fluid is coming out.
\end{remark*}

Let us mention a few words about the unique solvability of the network
wave model with a given boundary flow $F$. First of all we have not
given any precise function spaces where the coefficients of the
equation or the boundary flow would belong to. This means that it is
not possible to find an exact reference for the solvability. On the
other hand this is not a problem for linear hyperbolic problems in
general.

The problem is a one-dimensional linear hyperbolic problem on various
segments with co-joined boundary conditions. As the waves propagate
locally in time and space, one can start with the solution to a wave
equation on a single segment, as in e.g. Appendix 2 to Chapter V in
\cite{Courant--Hilbert2}. Then when the wavefront approaches a
junction, \cref{scalarH,Kirchhoff} determine the transmitted and
reflected waves to each segment joined there. Then the wave propagates
again according to \cite{Courant--Hilbert2}, and the boundary
conditions are dealt with as in a one segment case. At no point is
there any space to make any ``choices'', and thus there is unique
solvability. We will not comment on this further, but for more
technical details we refer to \cite{Courant--Hilbert2} for the wave
propagation in a segment and the boundary conditions, and to Section 3
of \cite{Belishev-network-paper1} for the wave propagation through
junctions. For an efficient numerical algorithm to the direct problem,
see \cite{Karney--McInnis}.

\subsection*{Boundary measurements}
The area reconstruction method presented in this paper requires the
knowledge of the \emph{impulse-response matrix} (IRM) for all boundary
points except one, which we denote by $x_0$.
\begin{definition}
  We define the \emph{impulse-response matrix}, or IRM, by $K =
  (K_{ij})_{i,j=1}^N$. For a given $i$ and $j$ we assume that $H$ and
  $Q$ satisfy the network wave model with boundary
  flow\footnote{$\delta_0(t)$ has dimensions of time$^{-1}$ because
    $\int \delta_0(t) dt = 1$ and $dt$ has dimensions of time.}
  \begin{equation}\label{boundaryFlow}
    \nu(x) Q(t,x) = \begin{cases}
      V_0 \, \delta_0(t), & x=x_i\\ 0, &
      x\neq x_i
    \end{cases}
  \end{equation}
  for $t\in\R$ and $x\in\partial\mathbb G, x\neq x_0$. Here $V_0$ is
  the volume of fluid injected at $t=0$. Then we set
  \begin{equation}\label{impulseResponseMatrix}
    K_{ij}(t) = H(t,x_j) / V_0
  \end{equation}
  for any $t\in\R$.
\end{definition}

The index $i$ represents the source and $j$ the receiver. Note also
that the IRM gives complete boundary measurement information: if
$\nu(x) Q(t,x_i) = F(t,x_i)$ were another set of injected flows at the
boundary, then the corresponding boundary pressure would be given by
\begin{equation} \label{HfromIRM}
  H(t,x_j) = \sum_{\substack{x_i\in\partial\mathbb G\\x_i\neq x_0}}
  \int K_{ij}(t-s,x_i) F(s,x_i) ds
\end{equation}
by the principle of superposition.

\section{Area reconstruction algorithm}
\subsection*{Strategy}
Once the impulse-response matrix from \cref{impulseResponseMatrix} has
been measured, we have everything needed to determine the
cross-sectional area in a tree network. As in the one pipe case
\cite{area1}, we will calculate special ``virtual'' boundary
conditions, which if applied to the pipe system, would make the
pressure constant in a given region at a given time. Exploiting this
and the knowledge of the total volume of water added to the network by
these virtual boundary conditions gives the total volume of the
region. Slightly perturbing the given region reveals the
cross-sectional area.

Multiply \cref{WH1} by $gA/a^2$ and integrate over a time-interval
${({0,\tau})}$, for a fixed $\tau>0$, and the whole network $\mathbb
G$. This gives
\begin{equation} \label{impedanceByBoundaryData}
  \sum_{x_j\in\partial\mathbb G} \int_0^\tau \nu(x_j) Q(t,x_j) dt =
  \int_{\mathbb G} H(\tau,x) \frac{g A(x)}{a^2(x)} dx
\end{equation}
if $H(0,x)=0$ and mass conservation from \cref{Kirchhoff} applies. Let
$p\in\mathbb G$ be a point at which we would like to recover the
cross-sectional area. To it we associate a set $D_p\subset\mathbb G$
which we shall define precisely later. Let us assume that there are
boundary flows $Q_p(t,x_j)$ so that at time $t=\tau$ we have
\begin{equation}\label{Hcharacteristic}
  H(\tau,x) = \begin{cases}
    h_0, &x\in D_p,\\
    0, &x\notin D_p,
  \end{cases}
\end{equation}
for some fixed pressure $h_0$. Then from
\cref{impedanceByBoundaryData} we have
\[
\sum_{x_j\in\partial\mathbb G} \int_0^\tau \nu(x_j) Q_p(t,x_j) dt =
\frac{h_0g}{a^2} \int_{D_p} A(x) dx.
\]
Denote the integral on the left by $V(D_p)$. We can calculate its
values once we know $Q_p$, hence we know the value of the integral on
the right. By varying the shape of $D_p$ we can then find the area
$A(p)$.

\subsection*{Admissible sets} \label{Dsection}
The only requirement for $D_p$ in the previous section was that
\cref{Hcharacteristic} holds. Boundary control, e.g. as in
\cite{Belishev-network-paper2}, implies that there are suitable
boundary flows $Q_p$ such that the equation holds for any reasonable
set $D_p$. However it is not easy to calculate the flows given an
arbitrary $D_p$. In this section we define a class of such sets for
which it is very simple to calculate the flows.

Let $p\in\mathbb G \setminus \mathbb J$ be a non-junction point in the
network at which we wish to solve for the cross-section area $A(p)$.
Since we have a tree network the point $p$ splits $\mathbb G$ into two
networks. Let $D_p$ be the part that is not connected to the
inaccessible boundary point $x_0$. The boundary of $D_p$ consists of
let us say $y_1,y_2,\ldots,y_k$ and $p$, where the points $y_j$ are
also boundary points of the original network.

For each boundary point $y_j\in\partial D_p$, $y_j\neq p$ define the
action time
\begin{equation}\label{actionTime}
  f(y_j) = \operatorname{TT}(y_j,p)
\end{equation}
where $\operatorname{TT}$ gives the travel-time of waves from $y_j$ to
$p$ calculated along the shortest path in $D_p$. Then set $f(x_j)=0$
for $x_j\in\partial\mathbb G \setminus\partial D_p$.

\begin{definition} \label{admissibleSet}
  We say that $D_p$ is an \emph{admissible set} associated with
  $p\in\mathbb G \setminus \mathbb J$ and with \emph{action time} $f$,
  if $D_p$ and $f$ are defined as above given $p$.
\end{definition}

It turns out that with the choice of admissible sets made above we
have
\begin{equation}\label{DpFromActionTimes}
  D_p = \{ x \in \mathbb G \mid \operatorname{TT}(x_j,x) < f(x_j)
  \text{ for some } x_j\in\partial\mathbb G \}.
\end{equation}
This is because $D_p$ lies between $p$ and the boundary points $x_j$
at which $f(x_j)\neq0$. This gives a geometric interpretation to the
set $D_p$, i.e. that it is the \emph{domain of influence} of the
\emph{action times $f(x_j)$, $x_j\in\partial\mathbb G$}. If we would
have zero boundary flows at first, and then active boundary flows when
$x_j\in\partial\mathbb G$, $\tau-f(x_j)< t\leq\tau$, then the
transient wave produced would have propagated throught the whole set
$D_p$ at time $t=\tau$ but not at all into $\mathbb G\setminus D_p$.

\subsection*{Reconstruction formula for the area} \label{areaSection}
Now that the form of the admissible sets have been fixed, we are ready
to prove in detail what was introduced at the beginning of this
section, namely a formula for solving the unknown pipe cross-sectional
area. For simplicity we assume that the wave speed is constant.

\begin{theorem} \label{areaAtP}
  Let $p\in \mathbb G \setminus \mathbb J$ be a non-junction point and
  $D_p$, $f$ the associated admissible set and action time. Let
  $\tau>\max f$. For a small time-interval $\Delta_t>0$ set
  \[
  (f+\Delta_t)(x_j) = \begin{cases} f(x_j)+\Delta_t, &x_j \in \partial
    D_p \setminus \{p\},\\ 0, &x_j \in \partial\mathbb G \setminus
    \partial D_p. \end{cases}
  \]
  For $\phi=f$ or $\phi=f+\Delta_t$ denote
  \begin{equation} \label{DpdDef}
    D^\phi = \{ x\in\mathbb G \mid \operatorname{TT}(x,x_j) <
    \phi(x_j) \text{ for some } x_j\in \partial D_p \setminus \{p\} \}
  \end{equation}
  so $D^f = D_p$ and $D^{f+\Delta_t}$ is a slight expansion of the
  former.

  Assume that $H_\phi,Q_\phi$ satisfy the network wave model with a
  boundary flow $F$ which is nonzero only during the action time
  $\phi$, namely $F(t,x_j)=0$ when $x_j\in\partial\mathbb G$, $x_j\neq
  x_0$ and $0 \leq t \leq \tau - \phi(x_j)$. Finally, assume that
  \begin{equation}
    H_\phi(\tau,x) = \begin{cases} h_0, &x\in D^\phi,\\ 0, &x\notin
      D^\phi,\end{cases}
  \end{equation}
  for some given pressure head $h_0>0$ at time $t=\tau$.

  Denote by
  \[
  V(\phi,\tau) = \frac{a^2}{h_0g} \sum_{x_j\in \partial D_p
    \setminus\{p\}} \nu(x_j) \int_0^\tau Q_\phi(t,x_j) dt
  \]
  the total volume of fluid injected into the network from the
  boundary in the time-interval ${({0,\tau})}$ to create the waves
  $H_\phi,Q_\phi$. Then
  \begin{equation}
    A(p) = \lim_{\Delta_t\to0} \frac{V(f+\Delta_t,\tau) -
      V(f,\tau)}{a\Delta_t}
  \end{equation}
  gives the cross-sectional area of the pipe at $p$.
\end{theorem}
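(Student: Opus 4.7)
The plan is to derive a volume identity for the admissible set $D^\phi$ from the integration-by-parts formula \cref{impedanceByBoundaryData}, then see that the perturbed set $D^{f+\Delta_t}$ differs from $D_p$ by a short pipe segment at $p$, so that the difference of the two volumes, divided by $a\Delta_t$, yields $A(p)$ in the limit.

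First, I would apply \cref{impedanceByBoundaryData} to $(H_\phi, Q_\phi)$ on the whole network $\mathbb G$ over the interval $({0,\tau})$. Since $H_\phi(\tau,\cdot)$ equals $h_0$ on $D^\phi$ and vanishes elsewhere by hypothesis, the right-hand side collapses to $(h_0 g/a^2) \int_{D^\phi} A(x)\,dx$. For the left-hand side, the only accessible boundary points $x_j\in\partial\mathbb G$ with a possibly nonzero contribution are those with $\phi(x_j)\neq 0$, i.e. those in $\partial D_p\setminus\{p\}$; all others have $F(\cdot,x_j)\equiv 0$ on $[0,\tau]$. The one remaining boundary point is the inaccessible end $x_0$. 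Here I need to argue that $Q_\phi(\cdot,x_0)\equiv 0$ on $[0,\tau]$: because each nonzero boundary flow is switched on only at time $\tau-\phi(y_j)=\tau-\operatorname{TT}(y_j,p)$ (or $\tau-\operatorname{TT}(y_j,p)-\Delta_t$), the wavefront generated from $y_j$ can travel at most to $p$ (resp.\ a bit past $p$) by time $\tau$; in particular, it does not cross $p$ into the subnetwork containing $x_0$, so no wave reaches $x_0$ and the boundary condition \cref{x0BC} forces both $H_\phi$ and $Q_\phi$ to vanish there. Combining these observations gives the identity
\begin{equation*}
V(\phi,\tau) \;=\; \int_{D^\phi} A(x)\,dx.
\end{equation*}

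Second, with that identity in hand I would subtract the two cases $\phi=f$ and $\phi=f+\Delta_t$. Since $D^f=D_p$ and $D^{f+\Delta_t}\supset D_p$, the difference equals $\int_{D^{f+\Delta_t}\setminus D_p} A(x)\,dx$. I then need to identify the set $D^{f+\Delta_t}\setminus D_p$ geometrically. By \cref{DpdDef}, a point $x\notin D_p$ lies in $D^{f+\Delta_t}$ iff $\operatorname{TT}(x,y_j)<f(y_j)+\Delta_t=\operatorname{TT}(y_j,p)+\Delta_t$ for some $y_j\in\partial D_p\setminus\{p\}$. For points on the near side of $p$ within the pipe containing $p$, the travel time from the closest $y_j$ increases by exactly the distance from $p$ divided by $a$; thus the new points are exactly those within travel time $\Delta_t$ of $p$ on the far (non-$D_p$) side of $p$. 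For $\Delta_t$ sufficiently small this extra set is the segment of length $a\Delta_t$ of the pipe containing $p$ that lies immediately past $p$ in the direction of $x_0$, and no junction is crossed.

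Finally, applying continuity of the cross-sectional area $A$ at $p$, the mean-value theorem gives
\begin{equation*}
V(f+\Delta_t,\tau)-V(f,\tau) \;=\; \int_{D^{f+\Delta_t}\setminus D_p} A(x)\,dx \;=\; a\Delta_t\,\bigl(A(p)+o(1)\bigr)
\end{equation*}
as $\Delta_t\to 0$, which after division by $a\Delta_t$ yields the desired formula. The main technical obstacle, in my view, is the careful finite-speed-of-propagation argument showing that the wave has not crossed $p$ (and in particular has not reached $x_0$) by time $\tau$, so that the boundary contributions in the integration-by-parts identity reduce precisely to the sum over $\partial D_p\setminus\{p\}$. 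This uses the fact that the action times were defined as the travel times from each $y_j$ to $p$, together with the domain-of-influence interpretation \cref{DpFromActionTimes}; the required domain-of-dependence property for the network wave model can be justified pipe-by-pipe via the one-dimensional characteristics together with the junction matching \cref{scalarH,Kirchhoff}, as outlined in the paper's discussion of the direct problem.
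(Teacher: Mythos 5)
Your proposal is correct and follows essentially the same route as the paper's own proof: derive $V(\phi,\tau)=\int_{D^\phi}A\,dx$ from the integration-by-parts/Kirchhoff identity (with the boundary terms vanishing away from $\partial D_p\setminus\{p\}$ and at $x_0$ by finite speed of propagation), identify $D^{f+\Delta_t}\setminus D_p$ as the segment of length $a\Delta_t$ just past $p$, and differentiate. Your explicit finite-speed argument for $Q_\phi(\cdot,x_0)\equiv 0$ is, if anything, slightly more careful than the paper's terse justification.
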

\begin{proof}
  The assumption about the boundary flow implies that
  \begin{equation}
    H_\phi(t,x_j) = Q_\phi(t,x_j) = 0
  \end{equation}
  in the same space-time set, i.e. when $x_j\in\partial\mathbb G$,
  $x_j\neq x_0$ and $0 \leq t \leq \tau - \phi(x_j)$. Consider
  \cref{WH1} for $H_\phi,Q_\phi$ where $\phi=f$ or $\phi=f+\Delta_t$
  is fixed. Multiply the equation by $gA/a^2$ and integrate
  $\int_0^\tau \int_{\mathbb G} \ldots dx dt$. This gives
  \[
  - \int_0^\tau \int_{\mathbb G} \partial_x Q_\phi(t,x) dx dt =
  \int_0^\tau \int_{\mathbb G} \frac{g A(x)}{a^2} \partial_t
  H_\phi(t,x) dx dt.
  \]
  The right-hand side is equal to
  \[
  \frac{g}{a^2} \int_{\mathbb G} A(x) \big(H_\phi(\tau,x) -
  H_\phi(0,x) \big) dx = \frac{g h_0}{a^2} \int_{D^\phi} A(x) dx
  \]
  because $H_\phi(0,x)=0$ by \cref{zeroInitial}. We will use the
  junction conditions of \cref{Kirchhoff} to deal with the left-hand
  side. But before that let us use a fixed coordinate system of the
  network $\mathbb G$.

  Let the pipes of the network be $P_1,\ldots, P_n$ and model them in
  coordinates by the segments ${({0,\ell_1})}, \ldots,
  {({0,\ell_n})}$, respectively. On pipe $P_k$, denote by $H_{\phi,k}$
  the scalar pressure head, and by $Q_{\phi,k}$ the pipe discharge
  into the positive direction. Then
  \[
  \int_{\mathbb G} \partial_x Q_\phi(t,x) dx = \sum_{k=1}^n
  \int_0^{\ell_k} \partial_x Q_{\phi,k}(t,x) dx = \sum_{k=1}^n \big(
  Q_{\phi,k}(t,\ell_k) - Q_{\phi,k}(t,0) \big).
  \]
  Note that $Q_{\phi,k}(t,\ell_k)$ is simply the discharge \emph{out}
  of the pipe $P_k$ at the latter's endpoint represented by $x=\ell_k$
  at time $t$. Similarly $-Q_{\phi,k}(t,0)$ is the discharge out from
  the other endpoint, the one represented by $x=0$. Now
  \cref{Kirchhoff} implies after a few considerations that
  \[
  -\int_{\mathbb G} \partial_x Q_\phi(t,x) dx = \sum_{x_j\in \partial
    D_p \setminus \{p\}} \nu(x_j) Q_\phi(t,x_j).
  \]
  Namely, previously we saw that the integral is equal to the sum of
  the total discharge out of every single pipe. But the discharge out
  of one pipe must go \emph{into} another pipe at junctions (there are
  no internal sinks or sources). Hence the discharges at the junctions
  cancel out, and only the ones at the boundary $\partial\mathbb G$
  remain. The boundary values are zero on $\partial \mathbb G
  \setminus (\partial D_p \cup \{x_0\})$ by the definition of $f$ and
  $f+\Delta_t$. We also have zero initial values and a non-active
  boundary condition at $x_0$, hence $Q_\phi(t,x_0)=0$ too. Thus we
  have shown that
  \begin{equation}
  \int_{D^\phi} A(x) dx = \frac{a^2}{g h_0} \sum_{x_j\in \partial D_p
    \setminus \{p\}} \nu(x_j) Q_\phi(t,x_j) = V(\phi,\tau).
  \end{equation}
  
  Next, we will show that
  \begin{equation} \label{dV}
    V(f+\Delta_t,\tau) - V(f,\tau) = \int_{D^{f+\Delta_t}\setminus
      D^f} A(x) dx.
  \end{equation}
  Recall that $p$ is not a vertex. Hence it is on a unique pipe, let's
  say ${({0,\ell})}$ and $p$ is represented by $x_p$ on this
  segment. Furthermore assume that (or change coordinates so that) the
  point represented by $0$ is in $D_p$, and the one by $\ell$ is
  not. This implies that the difference of sets
  $D^{f+\Delta_t}\setminus D^f$ is just a small segment on
  ${({0,\ell})}$.

  Let us look at the effect of $\Delta_t$ on $D^{f+\Delta_t}$ which is
  defined in \cref{DpdDef}. We have $x\in D^{f+\Delta_t} \setminus
  D^f$ if and only if $\operatorname{TT}(x,x_j) < f(x_j) + \Delta_t$
  for some boundary point $x_j\in\partial D_p$, $x_j\neq p$, but also
  $\operatorname{TT}(x,x_k) \geq f(x_k)$ for all boundary points
  $x_k\in\partial D_p$, $x_k\neq p$. The former implies that $x$ is at
  most travel-time $\Delta_t$ from $D_p$, and the latter says that it
  should not be in $D_p$. In other words $D^{f+\Delta_t}\setminus D^f
  = \{x\in {({0,\ell})} \mid x_p \leq x < x_p+a\Delta_t \}$ and so
  \begin{equation} \label{dVformula}
    V(f+\Delta_t,\tau)-V(f,\tau) = \int_{x_p}^{x_p+a\Delta_t} A(x) dx,
  \end{equation}
  where we abuse notation and denote the cross-sectional area of the
  pipe modelled by ${({0,\ell})}$ at the location $x$ also by $A(x)$
  without emphasizing that it is the area on this particular model of
  this particula pipe.

  \Cref{dVformula} gives the area at $p$, $A(x_p)$, by
  differentiating. Let $B(s) = \int_{x_p}^{x_p+s} A(x) dx$. Then
  $A(x_p) = \partial_s B(0)$ and the right-hand side of
  \cref{dVformula} is equal to $B(a\Delta_t)$. The chain rule for
  differentiation gives
  \[
  A(x_p) = \partial_s B(s)_{|s=0} = \frac{1}{a} \partial_{\Delta_t}
  (B(a\Delta_t))_{|\Delta_t=0} = \frac{1}{a} \lim_{\Delta_t\to0}
  \frac{V(f+\Delta_t,\tau)-V(f,\tau)}{\Delta_t}
  \]
  from which the claim follows.
\end{proof}

\subsection*{Solving for the area} \label{1pipeIdea}
In this section we will show one way in which boundary values of $Q$
can be determined so that the assumptions of \cref{areaAtP} are
satisfied. It is previously known that there are boundary flows giving
\cref{Hcharacteristic}, for example by \cite{Belishev-network-paper2}
where the authors show \emph{the exact $L^2$-controllability} of the
network both locally and in a time-optimal way. However there was no
simple way of calculating such boundary flows and the numerical
reconstruction algorithm of that paper does not seem computationally
efficient as it uses a Gram--Schmidt orthogonalization process on a
number of vectors inversely proportional to the network's
discretization size. We show that if the flow satisfies a certain
boundary integral equation then this is the right kind of
flow. Moreover, in the appendix we give a proof scheme for showing
that the equation has a solution.

More recently various layer-peeling type of methods have appeard
\cite{Kurasov--tree, LeafPeeling}. The method we present here is based
on another idea, one whose roots are in the physics of waves, namely
time reversibility. This is in essence a combination of the one pipe
case originally considered in \cite{Sondhi--Gopinath}, and of a
fundamentally similar idea for higher dimensional manifolds introduced
in \cite{Oksanen}. In the latter the author considers domains of
influence and action times on the boundary, and builds a boundary
integral equation whose solution then reveals the unknown inside the
manifold. This will be our guide.

\bigskip
Let us recall the unique continuation principle used for the area
reconstruction method for one pipe in
\cite{area1,Sondhi--Gopinath}. Consider a pipe of length $\ell>0$,
modelled by the interval ${({0,\ell})}$. Let $\mathcal H$ and
$\mathcal Q$ satisfy the Waterhammer \cref{WH1,WH2} without requiring
any initial conditions. Then
\begin{lemma} \label{uniqCont1pipe}
  If $\mathcal H(t,0)=2h_0$ and $\mathcal Q(t,0)=0$ for $t_m<t<t_M$,
  then inside the pipe we would have $\mathcal H(t,x)=2h_0$ and
  $\mathcal Q(t,x)=0$ in the space-time triangle $x/a +
  \abs{t-(t_M+t_m)/2} < (t_M-t_m)/2$, $0<x<\ell$. See \cref{fig2}.
\end{lemma}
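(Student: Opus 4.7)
The plan is to exploit finite speed of propagation for the first-order hyperbolic system \cref{WH1,WH2} and reduce the statement to the standard uniqueness theorem for a Cauchy problem with vanishing data on a timelike line.

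First I would observe that the constant pair $(\mathcal H_0,\mathcal Q_0) \equiv (2h_0,0)$ is a trivial solution of \cref{WH1,WH2}, since all its partial derivatives vanish, and it matches the given boundary values on $\{0\}\times(t_m,t_M)$. Subtracting, $\tilde H = \mathcal H - 2h_0$ and $\tilde Q = \mathcal Q$ again solve the same linear system, now with homogeneous Cauchy data $\tilde H(t,0) = \tilde Q(t,0) = 0$ for $t_m < t < t_M$. The claim reduces to proving $\tilde H \equiv \tilde Q \equiv 0$ on the triangle. The characteristic speeds of the system are $\pm a$ regardless of how $A(x)$ varies (the coefficient matrix has eigenvalues $\pm a$), so through any point $(x^*,t^*)$ the two characteristics $dx/dt = \pm a$ meet $\{x=0\}$ precisely at times $t^* \mp x^*/a$; the condition that both of these lie in $(t_m,t_M)$ is exactly $x^*/a + |t^* - (t_m+t_M)/2| < (t_M-t_m)/2$, so the triangle in the statement is the domain of determinacy of the Cauchy data.

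For the uniqueness step I would work with the Riemann-type combinations $R_\pm = \tilde H \pm \beta(x)\tilde Q$ with $\beta(x) = a/(gA(x))$. A short calculation using \cref{WH1,WH2} gives
\[
(\partial_t \pm a\,\partial_x) R_\pm \;=\; \frac{a\,\beta'(x)}{2\beta(x)}\,(R_+ - R_-),
\]
so the $R_\pm$ would be genuine Riemann invariants if $A$ were constant, and in the general case they satisfy a coupled linear transport system driven by each other. Integrating these ODEs along the two characteristic sides of the backward cone emanating from an arbitrary $(x^*,t^*)\in T$ back to $\{x=0\}$ produces a Volterra-type integral system for $R_\pm$ whose kernel is bounded on the closed triangle; Gronwall's inequality together with the zero Cauchy data at $x=0$ then forces $R_\pm \equiv 0$, hence $\tilde H \equiv \tilde Q \equiv 0$, throughout $T$.

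The main obstacle is the variable coefficient $A(x)$: with constant $A$ the invariants $R_\pm$ are literally conserved along their characteristics and the conclusion is immediate, while with variable $A$ one must instead use the coupled transport identity above and close the estimate by Gronwall. A convenient bookkeeping alternative, if one prefers to avoid Riemann invariants altogether, is the energy identity obtained by multiplying \cref{WH1,WH2} by $(gA/a^2)\tilde H$ and $(1/gA)\tilde Q$ and adding: this yields $\partial_t E + \partial_x F = 0$ with $E = \tfrac{gA}{2a^2}\tilde H^2 + \tfrac{1}{2gA}\tilde Q^2$ and $F = \tilde H \tilde Q$ satisfying $aE \pm F \geq 0$, and one can run the domain-of-dependence argument by integrating this identity over shrinking sub-triangles. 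Either way the argument stays purely local to $T$ and never appeals to the boundary behaviour at $x = \ell$, which is why the lemma makes no assumption there.
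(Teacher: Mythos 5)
Your main argument is correct, but note that the paper itself offers no proof of \cref{uniqCont1pipe}: it is stated as a recalled result, with the reader referred to \cite{area1,Sondhi--Gopinath}, and is then used as the building block for \cref{treeUniqCont}. So there is nothing to compare line by line; what you have done is supply the missing self-contained proof. Your route --- subtract the trivial solution $(2h_0,0)$, identify the characteristic speeds $\pm a$ (correctly independent of $A(x)$), and show the triangle is the domain of determinacy of the data segment on $\{x=0\}$ --- is exactly the ``unique continuation across a timelike line'' content of the lemma. The Riemann-invariant computation checks out: with $\beta=a/(gA)$ one indeed gets $(\partial_t\pm a\partial_x)R_\pm=a\beta'Q=\tfrac{a\beta'}{2\beta}(R_+-R_-)$, and integrating along the two characteristic families back to $\{x=0\}$ inside the triangle gives a Volterra system in the $x$-variable that Gronwall closes. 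The only caveat for that step is regularity: boundedness of $\beta'/\beta$ requires $A$ Lipschitz and bounded below; for the piecewise-smooth areas relevant to blockage detection one must patch the argument across jump points using the transmission conditions, though the conclusion survives.

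The one genuine weak spot is your ``convenient bookkeeping alternative.'' The identity $\partial_tE+\partial_xF=0$ with $E=\tfrac{gA}{2a^2}\tilde H^2+\tfrac{1}{2gA}\tilde Q^2$, $F=\tilde H\tilde Q$ and $aE\pm F\ge0$ is correct, but it is the energy adapted to \emph{forward-in-time} propagation from data on a $t$-slice. If you integrate it over a sub-triangle $\{0<y<x\}\cap T$, the two slanted characteristic sides contribute fluxes $\propto aE+F\ge0$ and $\propto -(aE-F)\le0$ of \emph{opposite} signs, and the divergence theorem yields no control of anything on $\{y=x\}$; the argument does not close as stated. A sidewise estimate needs the $x$-adapted energy: rewriting the system as an evolution in $x$ and testing appropriately gives $\partial_x\bigl(\tfrac12\tilde Q^2+\tfrac12(gA/a)^2\tilde H^2\bigr)+\partial_t\bigl(\tfrac{gA}{a^2}\tilde Q\tilde H\bigr)=\tfrac12[(gA/a)^2]'\tilde H^2$, whose corner terms on the shrinking interval $(t_m+x/a,\,t_M-x/a)$ have the right sign by the arithmetic--geometric mean inequality, after which Gronwall in $x$ finishes. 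Since the alternative is offered only as an option, this does not affect the validity of your proof, but the remark should either be corrected in this way or dropped.
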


\begin{figure}
  \centering
  \includegraphics{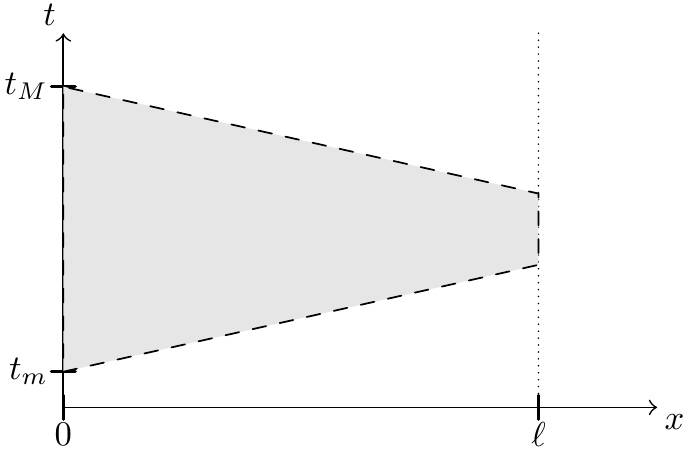}
  \caption{The region $\frac{x}{a} + \abs{t-\frac{t_M+t_m}{2}} <
    \frac{t_M-t_m}{2}$, $0<x<\ell$.}
  \label{fig2}
\end{figure}
The lemma was then used to build a virtual solution $H,Q$ satisfying
also the initial conditions, and which would have $H(\tau,x)=h_0$ for
$x < a \tau$ and $H(\tau,x)=0$ for $x>a\tau$ for a given
$\tau>0$. Without going into detailed proofs, the same unique
continuation idea works for a tree network. The reason is that one can
propagate $\mathcal H=2h_0$, $\mathcal Q=0$ from one end of a pipe to
the other, but by keeping in mind that the time-interval where these
hold shrinks as one goes further into the pipe. Do this first on all
the pipes that touch the boundary. Then use the junction conditions
from \cref{scalarH,Kirchhoff} to see that $\mathcal H=2h_0$, $\mathcal
Q=0$ on the next junctions. Then repeat inductively. We have shown
\begin{proposition}\label{treeUniqCont}
  Let $p\in\mathbb G \setminus \mathbb J$ and let $D_p \subset \mathbb
  G$ be admissible associated with $p$ and with action time $f$. Let
  $\mathcal H, \mathcal Q$ satisfy \cref{WH1,WH2} and the junction
  conditions of \cref{scalarH,Kirchhoff}. Let $\tau \geq \max f$ and
  assume that
  \begin{equation} \label{inductionReq}
    \mathcal H(t,x_j) = 2h_0, \qquad \mathcal Q(t,x_j) = 0, \qquad
    x_j\in\partial\mathbb G, \quad \abs{t-\tau} < f(x_j).
  \end{equation}
  Then $\mathcal H(t,x) = 2h_0$ and $\mathcal Q(t,x) = 0$ whenever
  $x\in\mathbb G$, $0<t<2\tau$ and
  \begin{equation} \label{inductionConc}
    \operatorname{TT}(x,x_j) + \abs{\tau-t} < f(x_j)
  \end{equation}
  for some $x_j\in\partial\mathbb G$.
\end{proposition}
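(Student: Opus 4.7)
The plan is a structural induction on the tree $D_p$ viewed as rooted at $p$. At each step I propagate the pair $(\mathcal{H}, \mathcal{Q}) = (2h_0, 0)$ along a single pipe using \cref{uniqCont1pipe}, and combine contributions at a junction using the junction conditions \cref{scalarH,Kirchhoff}. Since $f$ vanishes on $\partial\mathbb{G}\setminus\partial D_p$, the hypothesis \cref{inductionReq} is nontrivial only at the leaves $y_j\in\partial D_p\setminus\{p\}$ of $D_p$, where $f(y_j)=\operatorname{TT}(y_j,p)$.

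For the base case, let $y_j$ be such a leaf and $P_{y_j}$ the unique pipe containing it. Applying \cref{uniqCont1pipe} to $P_{y_j}$ with $t_m=\tau-f(y_j)$ and $t_M=\tau+f(y_j)$ (reversing coordinates if $y_j$ corresponds to $x=\ell$ rather than $x=0$) gives $(\mathcal{H},\mathcal{Q})=(2h_0,0)$ throughout the space-time triangle $\operatorname{TT}(x,y_j)+|t-\tau|<f(y_j)$ inside $P_{y_j}$. In particular, the constant state reaches the adjacent junction $v_1$ with a residual window $|t-\tau|<\operatorname{TT}(v_1,p)$.

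For the inductive step at an internal junction $v\in\mathbb{J}\cap D_p$, let $P_0$ be the parent pipe (the pipe incident to $v$ lying on the path from $v$ to $p$) and $P_1,\ldots,P_{m-1}$ the child pipes. Suppose inductively that at the $v$-end of each $P_i$ with $i\geq 1$ the pair $(\mathcal{H},\mathcal{Q})$ equals $(2h_0,0)$ throughout $|t-\tau|<\operatorname{TT}(v,p)$. Then \cref{scalarH} forces $\mathcal{H}(t,v)=2h_0$ on $P_0$ in the same window, and \cref{Kirchhoff}, rewritten as $\nu_0\mathcal{Q}_0(t,v)=-\sum_{i=1}^{m-1}\nu_i\mathcal{Q}_i(t,v)=0$, yields $\mathcal{Q}(t,v)=0$ on $P_0$. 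A second application of \cref{uniqCont1pipe} to $P_0$ starting from its $v$-end then propagates $(2h_0,0)$ to every point $x\in P_0$ with $\operatorname{TT}(x,v)+|t-\tau|<\operatorname{TT}(v,p)$; for $x$ between $v$ and $p$ this simplifies to $|t-\tau|<\operatorname{TT}(x,p)$.

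The induction closes because of the telescoping identity $f(y_j)=\operatorname{TT}(y_j,v)+\operatorname{TT}(v,p)$, valid for any leaf $y_j$ whose tree path to $p$ passes through $v$: each application of \cref{uniqCont1pipe} along that path shaves exactly $\operatorname{TT}(\cdot,v)$ off the initial window of size $f(y_j)$, leaving $\operatorname{TT}(v,p)$ at $v$ regardless of which leaf the information originated from. Ensuring that this common window is produced on all $m-1$ child pipes simultaneously is the main subtlety of the argument, since without a common window Kirchhoff cannot be invoked on all children at once; the identity above, which is precisely why the action time $f$ is defined via $\operatorname{TT}(\cdot,p)$ in \cref{actionTime}, takes care of it automatically. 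Iterating the induction exhausts $D_p$, delivering $(\mathcal{H},\mathcal{Q})(t,x)=(2h_0,0)$ for $|t-\tau|<\operatorname{TT}(x,p)$ at every $x\in D_p$, which matches \cref{inductionConc} since the optimal $x_j$ in the hypothesis is any $y_j$ whose path to $p$ passes through $x$, giving $f(y_j)-\operatorname{TT}(x,y_j)=\operatorname{TT}(x,p)$. The range $0<t<2\tau$ in the conclusion is then automatic from $|t-\tau|<f(x_j)\leq\max f\leq\tau$.
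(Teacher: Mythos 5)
Your proof is correct and follows essentially the same route as the paper, which only sketches the argument: propagate $(\mathcal H,\mathcal Q)=(2h_0,0)$ inward from the boundary pipes via \cref{uniqCont1pipe}, pass through junctions using \cref{scalarH,Kirchhoff}, and iterate. Your telescoping identity $f(y_j)=\operatorname{TT}(y_j,v)+\operatorname{TT}(v,p)$ makes explicit why all child pipes reach a junction with a common time window --- a detail the paper leaves implicit --- but it is the same induction.
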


We can now write an integral equation whose solution gives waves with
$H(\tau,x)=h_0$ for $x\in D_p$ and $H(\tau,x)=0$ for $x\notin D_p$ at
time $t=\tau$.
\begin{theorem} \label{treeEquationForH1}
  Let $K_{ij}$ be the impulse-response matrix from
  \cref{impulseResponseMatrix}. If $A$ is constant near each network
  boundary point we have
  \[
  K_{ij}(t) = \frac{a}{A(x_i) g} \delta_0(t) \delta_{ij} + k_{ij}(t)
  \]
  for some function\footnote{$k_{ij}$ might be a distribution if $A$
    is not smooth enough.} $k_{ij}=k_{ji}$ that vanishes near $t=0$.

  Let $p\in\mathbb G \setminus \mathbb J$ be and let $D_p \subset
  \mathbb G$ be the admissible set associated with $p$, and with
  action time $f$. Take $\tau\geq\max f$ and let $Q_p(t,x_j)$ satisfy
  \begin{align}
    h_0 &= \frac{a}{A(x_j) g} \nu(x_j) Q_p(t,x_j) \notag\\ &\quad +
    \sum_{\substack{x_i\in\partial\mathbb G\\x_i\neq x_0}}
    \frac{\nu(x_i)}{2} \int_0^\tau Q_p(s,x_i) \big( k_{ij}(\abs{t-s})
    + k_{ij}(2\tau-t-s) \big) ds \label{Qequation}
  \end{align}
  when $x_j\in\partial\mathbb G$, $x_j\neq x_0$,
  $\tau-f(x_j)<t\leq\tau$ and
  \begin{equation} \label{Qsupport}
    Q_p(t,x_j) = 0
  \end{equation}
  when $x_j\in\partial\mathbb G$, $x_j\neq x_0$ and $0\leq t \leq
  \tau-f(x_j)$.
  
  Then if $H,Q$ satisfy the network wave model with boundary flow $\nu
  Q_p$ we have
  \begin{equation} \label{H1}
    H(\tau,x) = \begin{cases} h_0, &x\in D_p,\\ 0, &x\in \mathbb G
      \setminus D_p. \end{cases}
  \end{equation}
\end{theorem}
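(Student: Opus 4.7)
The plan is to construct a ``virtual'' wave $\mathcal{H},\mathcal{Q}$ via time reversal so that Proposition~\ref{treeUniqCont} applies and pins $H(\tau,x) = h_0$ throughout $D_p$; the complement $\mathbb{G}\setminus D_p$ will then be handled directly by finite speed of propagation. Since the value $H(\tau,x)$ depends only on the boundary flow up to time $\tau$, I first extend $Q_p$ from $[0,\tau]$ to $[0,2\tau]$ by the symmetric reflection $Q_p(2\tau - t, x_j) := Q_p(t,x_j)$---this extension is merely a proof device and does not change the value of $H(\tau,x)$. Let $H,Q$ be the network wave model solution driven by this extended flow on $[0,2\tau]$, and define $H^{-}(t,x) := H(2\tau - t, x)$ and $Q^{-}(t,x) := -Q(2\tau - t, x)$; a direct chain-rule check (with the sign flip on $Q^{-}$) shows that $H^{-},Q^{-}$ again satisfy \cref{WH1,WH2,scalarH,Kirchhoff}. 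Put $\mathcal{H} := H + H^{-}$ and $\mathcal{Q} := Q + Q^{-}$; then $\mathcal{H}(\tau,x) = 2H(\tau,x)$ and $\mathcal{Q}(\tau,x) = 0$, and the symmetric extension gives $\nu(x_j)\mathcal{Q}(t,x_j) = \nu(x_j)[Q_p(t,x_j) - Q_p(2\tau - t, x_j)] = 0$ on the whole interval $\abs{t-\tau} < f(x_j)$.

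The crux is to verify that equation \eqref{Qequation} is precisely the condition $\mathcal{H}(t,x_j) = 2h_0$ on the same interval. Starting from the superposition formula \eqref{HfromIRM} together with $K_{ij}(t) = \frac{a}{A(x_i)g}\delta_0(t)\delta_{ij} + k_{ij}(t)$, I write each of $H(t,x_j)$ and $H(2\tau-t,x_j)$ as an instantaneous diagonal term plus an integral of $k_{ij}$ against $Q_p$ over $[0,2\tau]$. The two diagonals add to $\frac{2a}{A(x_j)g}\nu(x_j)Q_p(t,x_j)$ by the symmetry of the extension. In each integral I then split at $s = \tau$ and substitute $s \mapsto 2\tau - s$ in the upper half, using $Q_p(2\tau - s, x_i) = Q_p(s, x_i)$ to reduce everything to integrals on $[0,\tau]$. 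Causality of $k_{ij}$ merges the pair $k_{ij}(t-s)$ and $k_{ij}(s-t)$ into $k_{ij}(\abs{t-s})$, and an analogous combination in the $H^{-}$ integrals produces $k_{ij}(2\tau - t - s)$; the leftover piece $k_{ij}(t + s - 2\tau)$ is supported on the measure-zero set $t = s = \tau$ and drops out. Setting the result equal to $2h_0$ and dividing by $2$ is exactly \eqref{Qequation}.

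With the two boundary identities in hand, Proposition~\ref{treeUniqCont} applied to $\mathcal{H},\mathcal{Q}$ yields $\mathcal{H}(t,x) = 2h_0$ wherever $\operatorname{TT}(x,x_j) + \abs{\tau-t} < f(x_j)$ for some $x_j \in \partial\mathbb{G}$; specializing $t = \tau$ and using \eqref{DpFromActionTimes} gives $H(\tau,x) = h_0$ on all of $D_p$. For $x \in \mathbb{G} \setminus D_p$ one has $\operatorname{TT}(x,x_j) \geq f(x_j)$ at every boundary point, and since $Q_p$ is supported in $(\tau - f(x_j),\tau]$ at each $x_j$ and the initial data \eqref{zeroInitial} vanishes, finite speed of propagation forces $H(\tau,x) = 0$. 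The main obstacle is the algebraic bookkeeping of the second paragraph: tracking which of the four pieces produced by splitting and substituting survive the causality restriction on $k_{ij}$, so that they recombine into the precise symmetric kernel $k_{ij}(\abs{t-s}) + k_{ij}(2\tau - t - s)$ appearing in \eqref{Qequation}.
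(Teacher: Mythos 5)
Your proposal is correct and follows essentially the same route as the paper: the symmetric extension $Q_p(2\tau-t,x_j)=Q_p(t,x_j)$, the time-reversed pair $\mathcal H = H + H(2\tau-\cdot,\cdot)$, $\mathcal Q = Q - Q(2\tau-\cdot,\cdot)$, the IRM algebra recombining the kernels into $k_{ij}(\abs{t-s})+k_{ij}(2\tau-t-s)$, and the appeal to Proposition~\ref{treeUniqCont} plus finite propagation speed. The only cosmetic differences are that you explicitly verify the time-reversed fields satisfy the governing equations and organize the kernel bookkeeping by splitting before invoking causality rather than after, neither of which changes the argument.
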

\begin{remark}
  If one sets $Q_p(2\tau-t,x_j)=Q_p(t,x_j)$, one could instead have
  \[
  h_0 = \frac{a}{A(x_j) g} \nu(x_j) Q_p(t,x_j) +
  \sum_{\substack{x_i\in\partial\mathbb G\\x_i\neq x_0}}
  \frac{\nu(x_i)}{2} \int_0^{2\tau} Q_p(s,x_i) k_{ij}(\abs{t-s}) ds
  \]
  with $x_j\in\partial\mathbb G$, $\abs{t-\tau}<f(x_j)$ and
  $Q_p(t,x_j)=0$ for $\abs{t-\tau}\geq f(x_j)$.
\end{remark}
\begin{proof}
  The claim for the impulse response matrix is standard. See for
  example Appendix 2 to Chapter V in
  \cite{Courant--Hilbert2} for the solution to the one
  segment setting, and then use mathematical induction and the
  junction conditions of \cref{scalarH,Kirchhoff}.

  Extend $Q_p$ symmetrically past $\tau$,
  i.e. $Q_p(2\tau-t,x_j)=Q_p(t,x_j)$ for $0\leq t\leq\tau$ and
  $x_j\in\partial\mathbb G$, $x_j\neq x_0$. Continue $H$ and $Q$ to
  $0<t<2\tau$ while still having $Q=Q_p$ as the boundary condition at
  $x\neq x_0$, and \cref{x0BC} when $x=x_0$. Define
  \[
  \mathcal H(t,x) = H(t,x) + H(2\tau-t,x), \qquad \mathcal Q(t,x) =
  Q(t,x) - Q(2\tau-t,x)
  \]
  for $x\in\mathbb G$ and $0<t<2\tau$. The symmetry of $Q_p$ implies
  that $\mathcal Q(t,x_j) = 0$ for $x_j\in\partial\mathbb G$, $x_j\neq
  x_0$ and $0<t<2\tau$.

  For the pressure, recall that the properties of the impulse response
  matrix from \cref{HfromIRM} imply that
  \[
  H(t,x_j) = \sum_{\substack{x_i\in\partial\mathbb G\\x_i\neq x_0}}
  \int_{-\infty}^\infty K_{ij}(t-s) \nu(x_i) Q_p(s,x_i) ds
  \]
  for $x_j\in\partial\mathbb G$, $x_j\neq x_0$ and $0<t<2\tau$. It is
  then easy to calculate that
  \[
  H(t,x_j) = \frac{a}{A(x_j) g} \nu(x_j) Q_p(t,x_j) +
  \sum_{\substack{x_i\in\partial\mathbb G\\x_i\neq x_0}} \int_0^t
  k_{ij}(t-s) \nu(x_i) Q_p(s,x_i) ds
  \]
  because $K_{ij}(t-s) = 0$ when $s>t$ and $Q_p(s,x_i)=0$ when
  $s<0$. For $H(2\tau-t,x_j)$ split the integral to get
  \begin{align*}
    &\int_0^{2\tau-t} k_{ij}(2\tau-t-s) Q_p(s,x_i) ds \\&\qquad =
    \int_0^\tau k_{ij}(2\tau-t-s) Q_p(s,x_i) ds + \int_\tau^{2\tau-t}
    k_{ij}(2\tau-t-s) Q_p(s,x_i) ds \\&\qquad = \int_0^\tau
    k_{ij}(2\tau-t-s) Q_p(s,x_i) ds + \int_t^\tau k_{ij}(s-t)
    Q_p(s,x_i) ds
  \end{align*}
  where we again used the time-symmetry of $Q_p$. Summing all terms
  and using $Q_p(2\tau-t,x_j)=Q_p(t,x_j)$ we see that
  \begin{align*}
    &\mathcal H(t,x_j) = H(t,x_j) + H(2\tau-t,x_j) = 2
    \frac{a}{A(x_j) g} \nu(x_j) Q_p(t,x_j) \\&\quad +
    \sum_{\substack{x_i\in\partial\mathbb G\\x_i\neq x_0}} \nu(x_i)
    \int_0^\tau Q_p(s,x_i) \big( k_{ij}(\abs{t-s}) + k_{ij}(2\tau-t-s)
    \big) ds = 2h_0
  \end{align*}
  when $x_j\in\partial\mathbb G$ and $\abs{t-\tau}<f(x_j)$. The
  assumptions of \cref{treeUniqCont} are now satisfied and so
  $\mathcal H(t,x) = 2h_0$ and $\mathcal Q(t,x) = 0$ when $0<t<2\tau$
  and $\operatorname{TT}(x,x_j) + \abs{\tau-t} < f(x_j)$ for some
  $x_j\in\partial\mathbb G$. \cref{DpFromActionTimes} and the finite
  speed of wave propagation imply \cref{H1}.
\end{proof}

\begin{remark}
  Assuming an impulse-response matrix that is measured to infinite
  precision and without modelling errors, one can show that
  \cref{Qequation,Qsupport} have a solution. The idea of the proof is
  shown in the appendix. The solution is not necessarily unique, but
  it gives a unique reconstructed cross-sectional area.
\end{remark}

\section{Step-by-step algorithm} \label{algoSect}
\subsection*{Measuring the impulse-response matrix}
Recall \cref{impulseResponseMatrix}: the impulse-response matrix
$K=(K_{ij})_{i,j=1}^N$ is defined by $K_{ij}(t) = H(t,x_j)/V_0$ where
$H,Q$ solve the Waterhammer \cref{WH1,WH2}, junction
\cref{scalarH,Kirchhoff}, zero initial conditions of
\cref{zeroInitial} and a flow impulse of volume $V_0$ at the boundary
vertex $x_i$ and zero flow at other accessible vertices, as in
\cref{boundaryFlow}. Here $x_i, x_j$ with $i,j\neq0$ are the
accessible boundary points, and $x_0$ is the inaccessible one that
doesn't produce surges.

\subsection*{Solving for the cross-sectional area}
In this second part of the step-by-step reconstruction algorithm we
assume that the impulse-response matrix $K$ has been calculated. It
can be either measured by closing all accessible ends, or by measuring
the system response in a different setting (i.e. different boundary
conditions), then pre-process the measured signal to obtain the
desired matrix $K$.

Once the impulse-response matrix has been measured as discussed in the
previous paragraph, the following mathematical algorithm can be
applied to recover the cross-sectional area inside a chosen pipe or
pipe-segment in the network.

\begin{algorithm} \label{alg1}
  This algorithm calculates the cross-sectional area using a
  discretization and \cref{alg2}.
  \begin{enumerate}
  \item \label{step:responseMat} Define $k_{ij}(t)$ for $i,j\neq0$ by
    \[
    k_{ij}(t) = K_{ij}(t) - \frac{a}{A(x_i) g} \delta_0(t) \delta_{ij}
    \]
    where $\delta_{ij}$ is the Kronecker delta.
  \item Choose a point $p_1$ in the network that is not a
    junction. The algorithm will reconstruct the cross-sectional area
    starting from $p_1$ and going towards $x_0$ until it hits the
    endpoint $p_2$ of the current pipe.
  \item Split the interval between $p_1$ and $p_2$ into pieces of
    length $\Delta_x$.
  \item \label{step:chooseP} Let $p$ be any point between two pieces
    above that has not been chosen yet. Calculate the internal volume
    $V(p)$ using \cref{alg2}.
  \item Redo \cref{step:chooseP} for all the points in the
    discretization of the interval between $p_1$ and $p_2$, and save
    the values of $V(p)$ associated with the point $p$.
  \item Denote the discretization by $(p(0)=p_1, p(1), \ldots,
    p(M)\approx p_2)$. Then the area at $p(k)$ is approximately the
    volume between the points $p(k)$ and $p(k+1)$ divided by
    $\Delta_x$. In other words
    \[
    A\big(p(k)\big) \approx \frac{V\big(p(k+1)\big) -
      V\big(p(k)\big)}{\Delta_x}.
    \]
    This would be an equality if $\Delta_x$ were infinitesimal, or if
    $A(p(k))$ would signify the average area over the interval $(p(k),
    p(k+1))$.
  \end{enumerate}
\end{algorithm}

\begin{algorithm} \label{alg2}
  This algorithm calculates the internal volume of the piece of
  network cut off by $p$: namely all points from which you have to
  pass through $p$ to get to $x_0$.
  \begin{enumerate}
  \item For any boundary point $x_j\neq x_0$ set
    \[
    f(x_j) = \begin{cases} \operatorname{TT}(x_j,p), & \text{if $p$ is
        between $x_j$ and $x_0$}, \\ 0, & \text{if not}, \end{cases}
    \]
    where $\operatorname{TT}$ gives the travel-time between points. It
    is just the distance in the network divided by the wave speed.
  \item \label{step:tau} Take $\tau \geq \max f$ and fix a pressure
    head $h_0>0$.
  \item \label{step:solveQ} For any boundary point $x_j\neq x_0$ and
    time $\tau-f(x_j) < t \leq \tau$, using regularization if
    necessary, let $Q_p$ solve
    \begin{align}
      &h_0 = \frac{a}{A(x_j) g} \nu(x_j) Q_p(t,x_j) \label{eq:solveQ}
      \\ & + \sum_{\substack{x_i\in\partial\mathbb G\\x_i\neq x_0}}
      \frac{\nu(x_i)}{2} \int_0^\tau Q_p(s,x_i) \big(
      k_{ij}(\abs{t-s}) + k_{ij}(2\tau-t-s) \big) ds \notag
    \end{align}
    and also simultanously set
    \[
    Q_p(t,x_i) = 0
    \]
    for boundary points $x_i\neq x_0$ and time $0\leq t \leq \tau -
    f(x_j)$. Thus the integral above can be calculated on $\tau-f(x_j)
    < s \leq \tau$.
  \item \label{step:volume} Set
    \begin{equation} \label{eq:volume}
      V(p) = \frac{a^2}{h_0g} \sum_{x_j\in\partial\mathbb G}
      \int_0^\tau \nu(x_j) Q_p(t,x_j) dt.
    \end{equation}
    It is the internal volume of the pipe network that is on the other
    side of $p$ than the inaccessible end $x_0$.
  \end{enumerate}
\end{algorithm}

\section{Numerical experiments}

All the programming here was done in GNU Octave \cite{octave}.

\begin{experiment}[Simple network with exact IRM] \label{exp1}
  We will start by solving for the trivial area of
  \cref{ex1}. Consider this a test for the implementation of the area
  reconstruction algorithm. We start by calculating the
  impulse-response matrix of vertices $A$ and $B$ while $C$ is
  inaccessible. $D$ is the junction.

  Set $A(x)=\SI{1}{\m\squared}$ everywhere, the gravitational
  acceleration $g=\SI{9.81}{\m\per\s\squared}$ and a wave speed of
  $a=\SI{1000}{\m\per\s}$.
  \begin{lstlisting}
    %% Physical parameters
    maxc = 1000; % maximal wave speed in the network
    g = 9.81; % standard gravity value (m/s^2)

    %% Network a-priori information
    V = 4; % total number of vertices
    L =   [400; 300; 1000]; % L(j) length of pipe j
    Adj = [1 4; 2 4; 4 3]; % pipe j goes from vertex Adj(j,1) to vertex Adj(j,2)
    A0 = [1,1]; % area at accessible pipe ends
    a0 = [maxc, maxc]; % wave speed at accessible pipe ends
  \end{lstlisting}

  Let us calculate the IRM by hand. If a flow of $V_0 \delta_0(t)$ is
  induced at point $A$, then it creates a propagating solution $Q =
  V_0 \delta(a x - t)$, $H= a V_0 \delta(ax-t) / (g A)$ along $AD$. If
  a pressure pulse of magnitude $M \delta_0$ is incident to $D$, then
  it transmits two pulses of magnitude $2M \delta_0 /3$ to $BD$ and
  $DC$, and reflects one pulse of magnitude $-M \delta_0 / 3$ back to
  $AD$. These follow from the junction conditions of
  \cref{scalarH,Kirchhoff}. Also, if a similar pressure pulse is
  incident to a boundary point with boundary condition $Q=0$, then a
  pulse of the same magnitude (no sign change) is reflected. However
  at that boundary point the pressure is measured as $2M
  \delta_0$. These considerations produce the following
  impulse-response matrix
  \begin{align*}
    K_{AA}(t) &= \frac{a}{g A} \Big( \delta_0\big(t\big) - \frac23
    \delta_0\big(t-\SI{0.8}{\s}\big) + \frac89 \delta_0\big(t -
    \SI{1.4}{\s}\big) + \frac29 \delta_0\big(t - \SI{1.6}{\s}\big) +
    \ldots \Big) \\ K_{BB}(t) &= \frac{a}{g A} \Big(
    \delta_0\big(t\big) - \frac23 \delta_0\big(t-\SI{0.6}{\s}\big) +
    \frac29 \delta_0\big(t-\SI{1.2}{\s}\big) + \frac89
    \delta_0\big(t-\SI{1.4}{\s}\big) + \ldots \Big) \\ K_{AB}(t) &=
    \frac{a}{g A} \Big( \frac43 \delta_0\big(t-\SI{0.7}{\s}\big) -
    \frac49 \delta_0\big(t-\SI{1.3}{\s}\big) - \frac49
    \delta_0\big(t-\SI{1.5}{\s}\big) + \ldots \Big) \\ K_{BA}(t) &=
    K_{AB}(t)
  \end{align*}
  for time $0\leq t \leq \SI{1.6}{\s}$.

  We must have $2\tau\leq\SI{1.6}{\s}$, and so the algorithm can solve
  for the area up to points of the network that are at most $a\tau =
  \SI{800}{\m}$ from each accessible end. Hence we can solve for the
  area only up to $\SI{400}{\m}$ in pipe $DC$ from the junction
  $D$. The reflections $k_{ij}$ used in \cref{alg1} are
  \begin{align*}
    k_{AA}(t) &= \frac{a}{g A} \Big( - \frac23
    \delta_0\big(t-\SI{0.8}{\s}\big) + \frac89 \delta_0\big(t -
    \SI{1.4}{\s}\big) + \frac29 \delta_0\big(t - \SI{1.6}{\s}\big) +
    \ldots \Big) \\ k_{BB}(t) &= \frac{a}{g A} \Big( - \frac23
    \delta_0\big(t-\SI{0.6}{\s}\big) + \frac29
    \delta_0\big(t-\SI{1.2}{\s}\big) + \frac89
    \delta_0\big(t-\SI{1.4}{\s}\big) + \ldots \Big) \\ k_{AB}(t) &=
    \frac{a}{g A} \Big( \frac43 \delta_0\big(t-\SI{0.7}{\s}\big) -
    \frac49 \delta_0\big(t-\SI{1.3}{\s}\big) - \frac49
    \delta_0\big(t-\SI{1.5}{\s}\big) + \ldots \Big) \\ k_{BA}(t) &=
    k_{AB}(t).
  \end{align*}
  \begin{lstlisting}[firstnumber=last]
    %% Measurements
    dt = 10/maxc; % in one time-step the wave propagates 10m
    experiment_duration = 1.61; % impulse-response matrix should go from time t=0 to experiment_duration.
    t = (0:dt:experiment_duration)';
    k = cell(2,2); % k is the response matrix
    % The below is calculated by hand for a network as above.
    k{1,1} = a0(1)/(A0(1)*g) *(...
    -2/3*1/dt.*(t >= 0.8-dt/2).*(t < 0.8+dt/2) ...
    +8/9*1/dt.*(t >= 1.4-dt/2).*(t < 1.4+dt/2) ...
    +2/9*1/dt.*(t >= 1.6-dt/2).*(t < 1.6+dt/2) ...
    );
    k{2,2} = a0(2)/(A0(2)*g) *(...
    -2/3*1/dt.*(t >= 0.6-dt/2).*(t < 0.6+dt/2)...
    +2/9*1/dt.*(t >= 1.2-dt/2).*(t < 1.2+dt/2) ...
    +8/9*1/dt.*(t >= 1.4-dt/2).*(t < 1.4+dt/2) ...
    );
    k{1,2} = a0(2)/(A0(2)*g) *(...
    +4/3*1/dt.*(t >= 0.7-dt/2).*(t < 0.7+dt/2)...
    -4/9*1/dt.*(t >= 1.3-dt/2).*(t < 1.3+dt/2) ...
    -4/9*1/dt.*(t >= 1.5-dt/2).*(t < 1.5+dt/2) ...
    );
    k{2,1} = k{1,2};
  \end{lstlisting}

  Let us define the action time functions for various points $p$ in
  the network. If $p \in AD$ then the action time is
  \[
  f^{AD}_p(x) = \begin{cases}
    TT(A,p), & x = A\\
    0, & x = B\\
    0, & x = C
  \end{cases}
  \]
  and for $p \in BD$
  \[
  f^{BD}_p(x) = \begin{cases}
    0, & x = A\\
    TT(B,p), & x = B\\
    0, & x = C
  \end{cases}.
  \]

  The travel-time from $A$ to $D$ is $\SI{0.4}{\s}$, and from $B$ to
  $D$ it is $\SI{0.3}{\s}$. Recall that the IRM has been measured only
  for time $t\leq \SI{1.6}{\s}$. Hence we can solve for the area up to
  $\SI{400}{\m}$ into pipe $DC$. Let the point $p\in DC$ be given by
  the action time function
  \[
  f^{DC}_p(x) = \begin{cases}
    \SI{0.4}{\s} + t_p, & x = A\\
    \SI{0.3}{\s} + t_p, & x = B\\
    0, & x = C
  \end{cases}
  \]
  where $0\leq t_p\leq \SI{400}{\m}/a = \SI{0.4}{\s}$ is the
  travel-time from $D$ to $p$ and recalling that we can solve the area
  only up to $\SI{400}{\m}$ from $D$.
  \begin{lstlisting}[firstnumber=last]
    %% Set parameters
    tau = 0.8; % area will be solved up to 2*tau*maxc from accessible point furthest to inaccessible point x3.
    assert(2*tau <= experiment_duration, 'Can recover area only up to experiment_duration*maxc/2.');
    reguparam = 1e-5; % Tikhonov regularization parameter
    dx = dt*maxc; % into how big chunks we discretize the pipe

    %% x-discretization
    maxL = [L(1); L(2); tau*maxc - max(L(1),L(2))];
    maxL = min(L,maxL); % maximum length of pipes that can be reached from all accessible vertices in time tau
    M = floor(maxL/dx); % number of discretized segments of length dx in each pipe

    %% Action times to various points in the network
    f = cell(length(L),1); % a different action time formula for points on each pipe

    % action times to pipe 1 (AD) points
    f{1} = nan(M(1),2);
    f{1}(:,1) = (1:M(1))'*dx./maxc;
    f{1}(:,2) = zeros(size((1:M(1))'));
    % action times to pipe 2 (BD) points
    f{2} = nan(M(2),2);
    f{2}(:,1) = zeros(size((1:M(2))'));
    f{2}(:,2) = (1:M(2))'*dx./maxc;
    % action times to pipe 3 (DC) points
    f{3} = nan(M(3),2);
    f{3}(:,1) = (L(1)+(1:M(3))'*dx)./maxc;
    f{3}(:,2) = (L(2)+(1:M(3))'*dx)./maxc;
  \end{lstlisting}

  Let us apply \cref{alg1} next. The numerical implementation of
  \cref{alg2}, \verb#makeHeq1#, is in the appendix.
  \begin{lstlisting}[firstnumber=last]
    %% Solve for the cross-sectional area
    % Initialize cells for saving various vectors
    pipeVolume = cell(length(L),1); % volume of network cut by p
    pipeArea = cell(length(L),1); % cross-sectional area at p
    pipeX = cell(length(L),1); % x-coordinates of points p

    for P=1:length(L) % P indexes the pipe number (AD, BD, DC)
      assert(max(max(f{P})) - tau <= dt/4, 'Travel-times must be at most tau.');
      V = nan(M(P),1); % volume of the network up to points p
      for p=1:M(P) % p indexes the point $p$ inside pipe P
        % use Algorithm 2:
        Qtau = makeHeq1(t, k, tau, f{P}(p,:), a0, g, A0, reguparam);
        V(p) = 0;
        % add to V the volume of water from each accessible end that would have gone INTO the pipe to make H=1 at t=tau:
        for ii = 1:length(Qtau)
          V(p) = sum(Qtau{ii})*dt + V(p)
        end
      end
      pipeVolume{P} = maxc^2/g*V;
      pipeArea{P} = (pipeVolume{P}(2:end)-pipeVolume{P}(1:end-1))/dx;
      pipeX{P} = (1:M(P)-1)'.*dx;
    end
  \end{lstlisting}

  A plot of the cross-sectional areas is shown in \cref{fig_ex1}.
  \begin{figure}
    \centering
    \includegraphics[width=\textwidth]{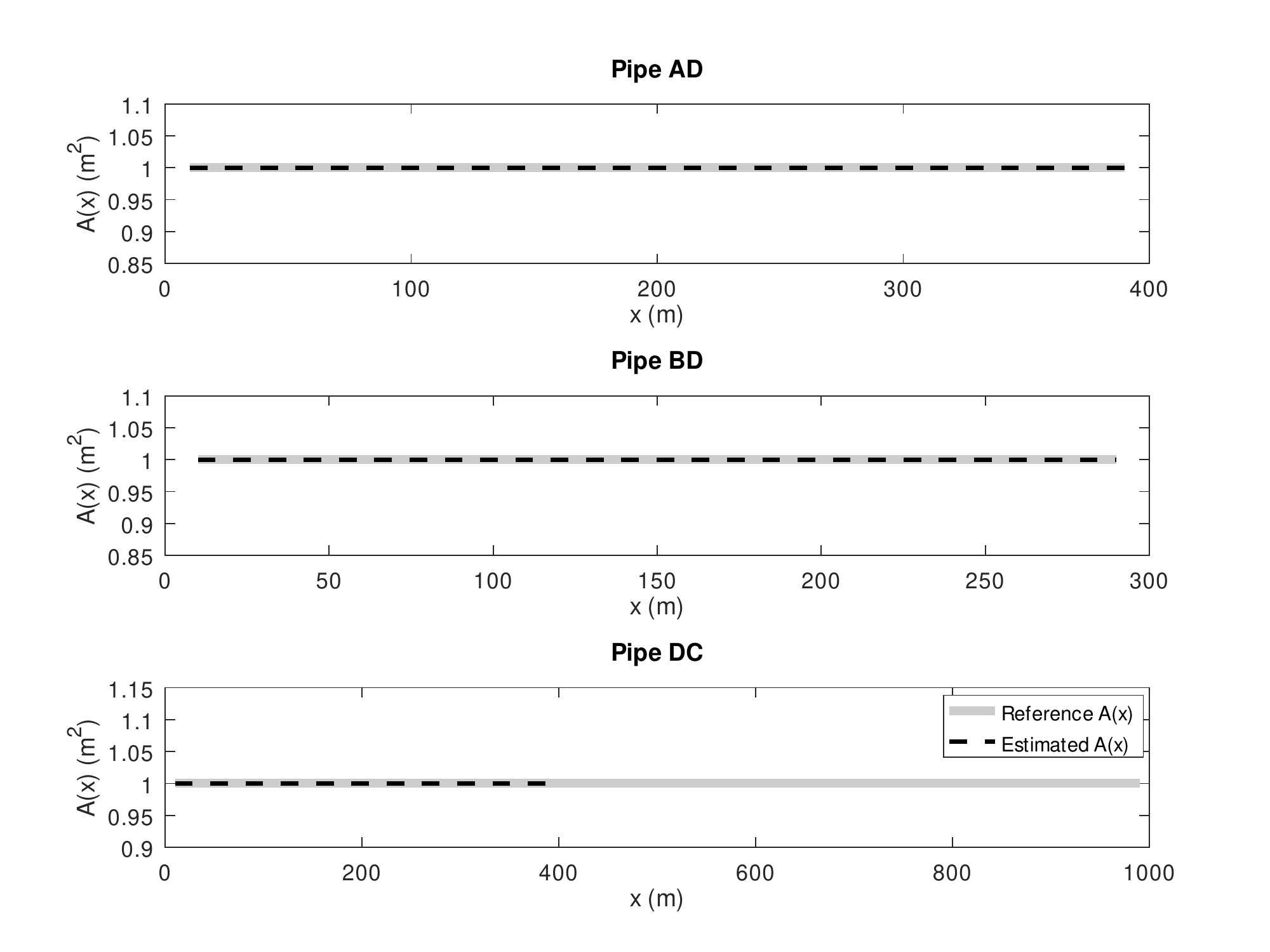}
    \caption{Solved cross-sectional areas of \cref{exp1}}
    \label{fig_ex1}
  \end{figure}
\end{experiment}

\begin{experiment}[Simulated IRM] \label{exp2}
  In the second experiment we consider a star-shaped network with four
  leaves, ending in points $A,B,C,D$. The internal node is denoted
  $E$. Let $D$ be the inaccessible end. Take the following lengths
  \begin{align*}
    AE = ({0,\SI{300}{\m}})&, &BE = ({0,\SI{400}{\m}})&, &CE =
    ({0,\SI{400}{\m}})&, &ED = ({0,\SI{500}{\m}}),&
  \end{align*}
  as presented in \cref{exp2_fig}.
  \begin{figure}
    \centering
    \includegraphics{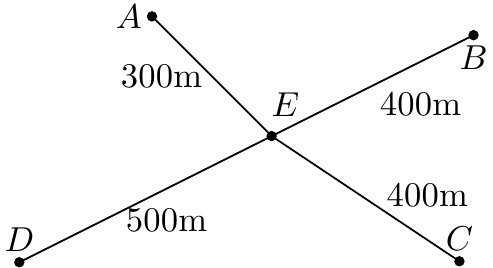}
    \caption{A more complicated network}
    \label{exp2_fig}
  \end{figure}

  We set a constant wave speed of $a=\SI{1000}{\m\per\s}$ everywhere
  and an area function that models blockages at certain locations.
  \begin{lstlisting}
    %% physical parameters
    maxc = 1000; % max wave speed
    g = 9.81; % standard gravity value (m/s^2)
  \end{lstlisting}

  \begin{lstlisting}[firstnumber=last]
    %% Network
    V = 5; % total number of vertices
    L =   [300; 400; 400; 500]; % pipe lengths
    Adj = [1 5; 2 5; 3 5; 5 4]; % to avoid display problems make arrows point towards inaccessible node (nbr 4)
    Afunc1 = @(s)( ones(size(s)) );
    Afunc2 = @(s)( 2*ones(size(s)) - (s>350).*(s<375).*0.6);
    Afunc3 = @(s)( ones(size(s)) - (s>210).*(s<250).*0.2);
    Afunc4 = @(s)( ones(size(s)) - (s>410).*(s<450).*0.4 - (s>150).*(s<250).*0.2);

    afunc1 = @(s)( maxc*ones(size(s)) );
    afunc2 = @(s)( maxc*ones(size(s)) );
    afunc3 = @(s)( maxc*ones(size(s)) );
    afunc4 = @(s)( maxc*ones(size(s)) );

    Afunc = {Afunc1; Afunc2; Afunc3; Afunc4};
    afunc = {afunc1; afunc2; afunc3; afunc4};

    BVtype = [1; 1; 1; 1; NaN]; % Are inputs pressure (0) or flow (1)
  \end{lstlisting}

  We simulate the IRM by an a finite-difference time domain (FDTD)
  algorithm with the following caveats: we use a Courant number
  smaller than one, simulate using a high resolution, and then
  interpolate the IRM to a lower time-resolution and use this as input
  for the inversion algorithm. This is to avoid the inverse crime
  \cite{KaipioSomersalo} which makes inversion algorithms give
  unrealistically good results when applied on data simulated with the
  same resolution or model as the inversion algorithm uses.
  \begin{lstlisting}[firstnumber=last]
    %% Measurements
    % FDTD parameters
    dx = 5;
    courant = 0.95;
    dt = courant*dx/maxc;
    experiment_duration = 1.9;
    doPlot = 0; % do we want to observe the FDTD simulation

    % Boundary area and wave speed. Accessible ends are 1, 2 and 3
    A0 = [Afunc1(0); Afunc2(0); Afunc3(0)];
    a0 = [afunc1(0); afunc2(0); afunc3(0)];
  \end{lstlisting}

  For numerical reasons instead of sending a unit impulse $Q = \nu
  \delta$ we send a unit step-function, and then differentiate the
  measurements with respect to time. We will not show the
  implementation of \verb#FDTD# and the self-explanatory functions
  \verb#removeInitialPulse#, \verb#medianSmooth# and
  \verb#differentiate# because they are not the focus of this already
  rather long article. The main point is the inversion algorithm.
  \begin{lstlisting}[firstnumber=last]
    t = (0:dt:experiment_duration)';
    F = bndrySourceOn(t);
    k = cell(3,3);
    for ii=1:3
      % Create input flows: constant flow at ii, otherwise zero
      Vdata = cell(V,1);
      for vv = 1:V
        Vdata{vv} = zeros(size(F));
      end
      Vdata{ii} = F;

      % Simulate the measurements with the given F
      [Hhist, Qhist, thist] = FDTD(L, V, Adj, BVtype, Vdata, t, ...
        doPlot, maxc, g, dx, courant, Afunc, afunc);
      % Remove the initial pulses and differentiate with respect to time
      for jj=1:3
        H = Hhist{jj};
        if(ii==jj)
          H = removeInitialPulse(H, thist, a0(ii), g, A0(ii));
        end
        % Smoothen slightly to make differentiation well behaved  
        H = medianSmooth(H, floor(0.02/(thist(2)-thist(1))));
        H = differentiate(H, thist);
        k{ii,jj} = H;
      end
    end

    % Avoid the inverse crime by interpolation to a lower resolution
    dx = 7;
    dt = dx / maxc;
    t = (thist(1): dt : experiment_duration)';
    for ii=1:size(k,1)
      for jj=1:size(k,2)
        k{ii,jj} = interp1(thist, k{ii,jj}, t);
      end
    end
  \end{lstlisting}
  The simulation gives us the following response matrix, as shown in
  \cref{exp2_input_fig}.
  \begin{figure}
    \centering
    \includegraphics[width=\textwidth]{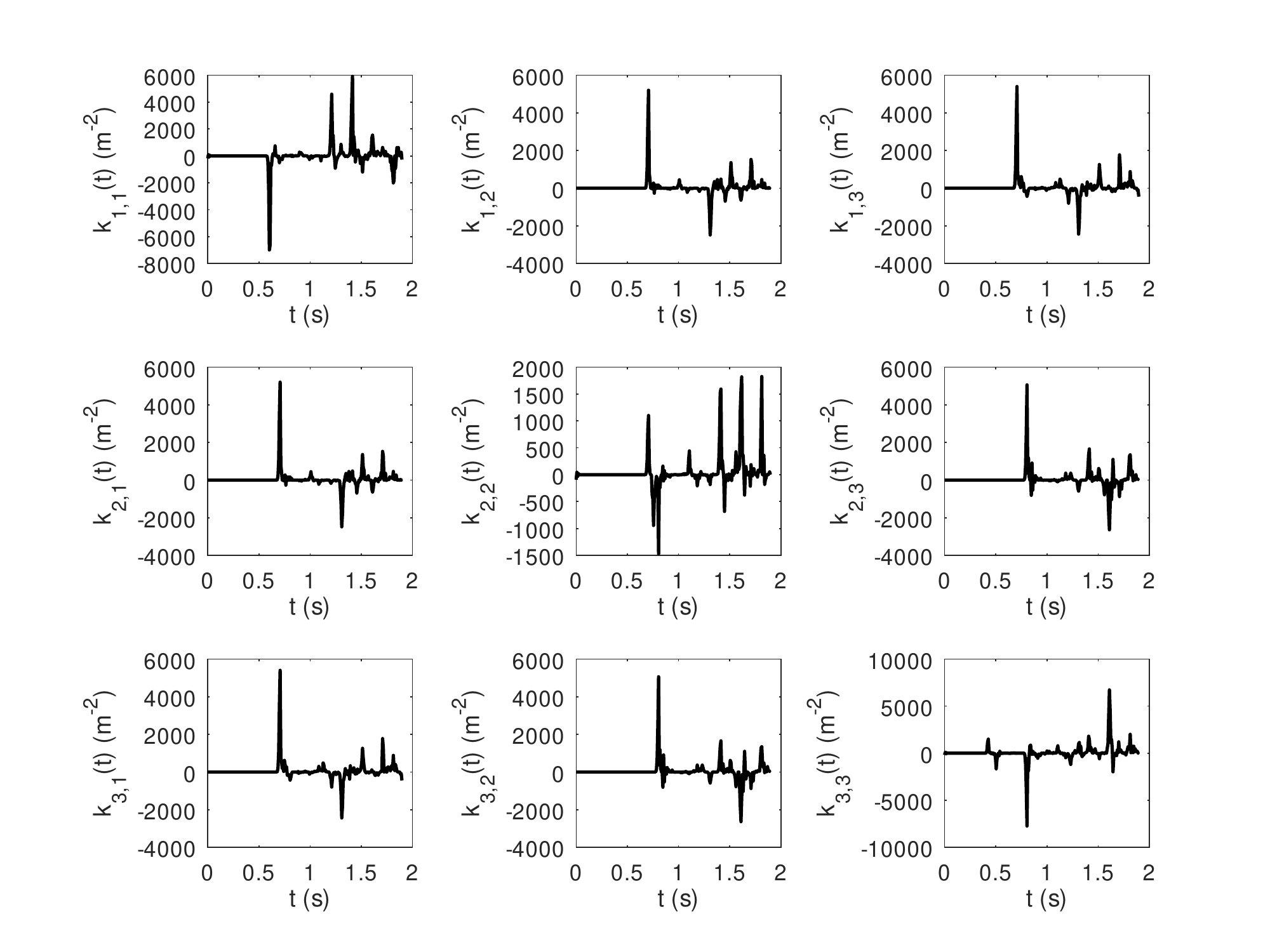}
    \caption{IRM for \cref{exp2}}
    \label{exp2_input_fig}
  \end{figure}

  Now solving the inverse problem has the same logic as in
  \cref{exp1}. There are two differences, a large one and a small
  one. The former is that the action time functions are of course
  different. This is what encodes the network topology for the
  inversion algorithm. And secondly we must use quite a lot of
  regularization when solving for the area of $ED$. This is because of
  the ``numerical error'' introduced on purpose by the Courant number
  smaller than one and interpolating the measurements to avoid doing
  an inverse cime.
  \begin{lstlisting}[firstnumber=last]
    %% Set parameters
    reguparam = [1e-5, 1e-5, 1e-5, 1e0]; % regularization parameter to Tikhonov regularization in various pipes
    tau = 0.9;

    %% x-discretization
    maxL = [L(1); L(2); L(3); tau*maxc - max([L(1) L(2) L(3)])];
    maxL = min(L,maxL); % maximum length of pipes that can be reached from all accessible vertices in time tau
    M = floor(maxL/dx); % Number of discretized segments of length dx in each pipe
    f = cell(length(L),1);

    %% Action times to various points in the network
    % action times to pipe 1 (AE) points
    f{1} = nan(M(1),3);
    f{1}(:,1) = (1:M(1))'*dx./maxc;
    f{1}(:,2) = zeros(size((1:M(1))'));
    f{1}(:,3) = zeros(size((1:M(1))'));
    % action times to pipe 2 (BE) points
    f{2} = nan(M(2),3);
    f{2}(:,1) = zeros(size((1:M(2))'));
    f{2}(:,2) = (1:M(2))'*dx./maxc;
    f{2}(:,3) = zeros(size((1:M(2))'));
    % action times to pipe 3 (CE) points
    f{3} = nan(M(3),3);
    f{3}(:,1) = zeros(size((1:M(3))'));
    f{3}(:,2) = zeros(size((1:M(3))'));
    f{3}(:,3) = (1:M(3))'*dx./maxc;
    % action times to pipe 4 (ED) points
    f{4} = nan(M(4),3);
    f{4}(:,1) = (L(1)+(1:M(4))'*dx)./maxc;
    f{4}(:,2) = (L(2)+(1:M(4))'*dx)./maxc;
    f{4}(:,3) = (L(3)+(1:M(4))'*dx)./maxc;
  \end{lstlisting}
  Then applying \cref{alg1} gives the solution as before.
  \begin{lstlisting}[firstnumber=last]
    %% Solve for the cross-sectional area
    % Initialize cells for saving various vectors
    pipeVolume = cell(length(L),1); % volume of network cut by p
    pipeArea = cell(length(L),1); % cross-sectional area at p
    pipeX = cell(length(L),1); % x-coordinate of points p

    for P=1:length(L) % P indexes the pipe number (AE, BE, CE, ED)
      assert(max(max(f{P})) - tau <= dt/4, 'Travel-times must be at most tau.');
      V = nan(M(P),1); % volume of the network up to points p
      for p=1:M(P) % p indexes the point $p$ inside pipe P
        Qtau = makeHeq1(t, k, tau, f{P}(p,:), a0,g,A0, reguparam(P));
        V(p) = 0;
        % add to V the total volume of water from accessible end that would have gone INTO the pipe to make H=1 at t=tau.
        for ii = 1:length(Qtau)
          V(p) = sum(Qtau{ii})*dt + V(p);
        end
      end
      pipeVolume{P} = maxc^2/g*V;
      pipeArea{P} = (pipeVolume{P}(2:end)-pipeVolume{P}(1:end-1))/dx;
      pipeX{P} = (1:M(P)-1)'.*dx;
    end
  \end{lstlisting}
  The solution is displayed in \cref{fig_ex2}. The gray uniform line
  represents the original cross-sectional area. The dashed line is the
  solution to the inverse problem.
  \begin{figure}
    \centering
    \includegraphics[width=\textwidth]{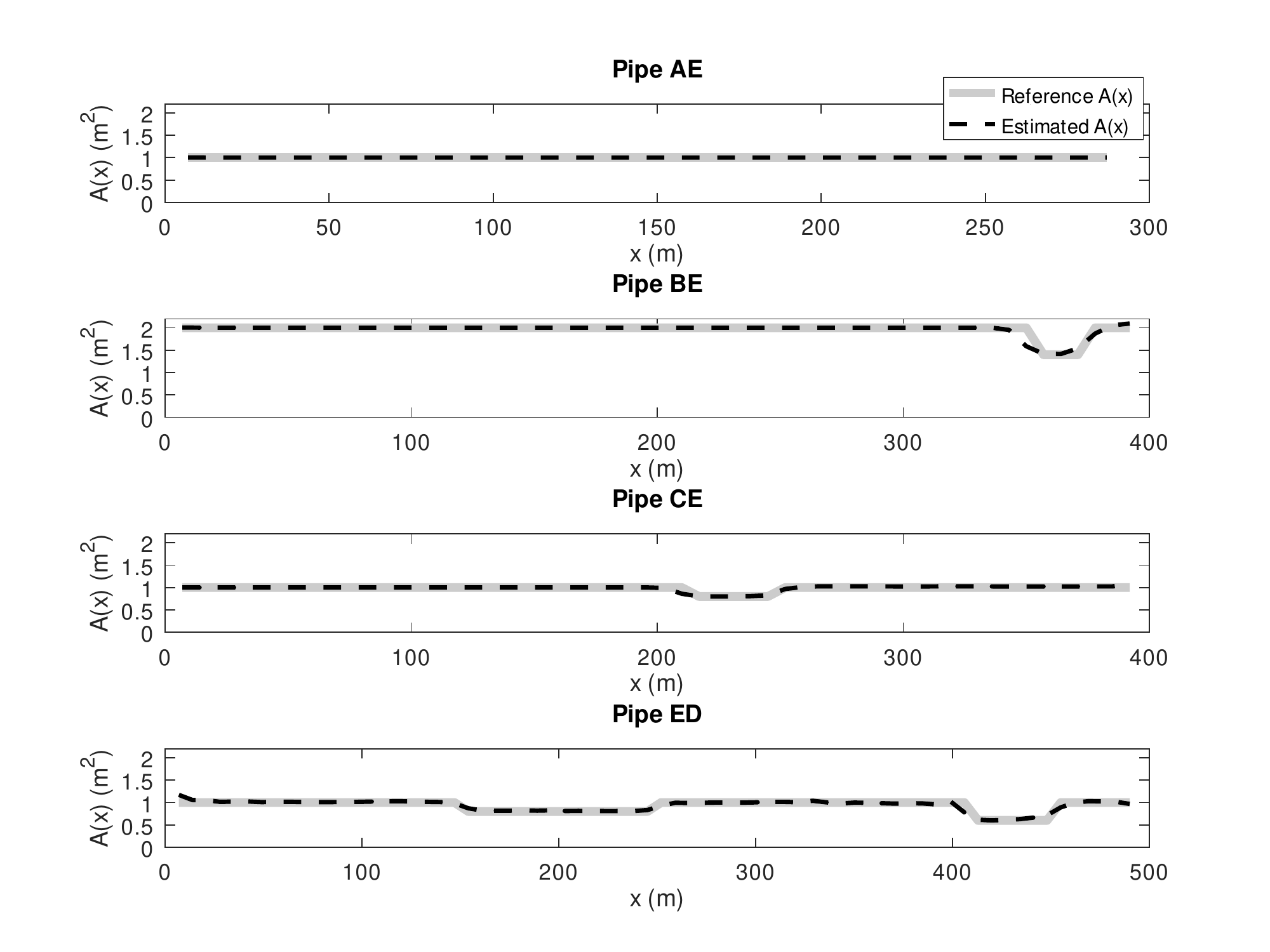}
    \caption{Solved cross-sectional areas of \cref{exp2}}
    \label{fig_ex2}
  \end{figure}
\end{experiment}

\section{Discussion and conclusions} \label{discussion}
We have developed and implemented an algorithm that reconstructs the
internal cross-sectional area of pipes, filled with fluid, in a
network arrangement. The region where the area is reconstructed must
form a tree network and the input to our algorithm is the
impulse-response measurements on all of the tree's ends except for
possibly one. The algorithm involves solving a boundary integral
equation which is mathematically solvable in the case of perfect
measurements and model (see the appendix). We wrote a step-by-step
reconstruction algorithm and tested it using two numerical
examples. The first one has a perfectly discretized measurement, and
the second one has a discretization and numerical error introduced on
purpose. Even with these errors, which are very typical when doing
real-world measurements, Tikhonov regularization allows us to
reconstruct the internal cross-sectional area with good
precision. However this is just a first study of this algorithm, and a
more in-depth investigation would be required for a more complete
picture of its ability in the case of noise and other errors in the
data. The theory is based on our earlier work \cite{area1} on solving
for the area of one pipe, and on an iterative time-reversal boundary
control algorithm \cite{Oksanen} in the context of multidimensional
manifolds.

We assumed in several places that the wave speed $a(x)$ is a known
constant. What if it is not? We considered this situation for a single
pipe in our previous article, \cite{area1}. In there, if the area is
known and the speed is unknown, then the algorithm can be slightly
modified to determine the wave speed profile along the pipe. If both
the wave speed and the area are unknown then it can determine the
hydraulic impedance $Z = a/gA$ as a function of the travel-time
coordinate. However this is not so straightforward for the network
case because of the more complicated geometry. What one should do
first is find an algorithm to solve this problem: the wave speed is
constant and known, the area is unknown, the topology of the network
is known, but the pipe lengths are unknown. We leave this problem for
future considerations as this article is already quite long. But it is
an important question, because in fact in some applications the
anomaly is the loss in pipe thickness rather than the change in pipe
area and this is often revealed by finding the wave speed along the
pipes assuming that the area remains unchanged \cite{Gong2014}.

In \cref{exp1} we did not use regularization and the solution was
perfect, as shown in \cref{fig_ex1}. However here we had the simplest
tree network, the simplest area formula, and perfectly discretized
measurements.

In \cref{exp2} we simulated the measurements using a finite difference
time domain (FDTD) method with Courant number smaller than one. Using
Tikhonov regularization was essential, and produced the reconstruction
shown in \cref{fig_ex2}. Without regularization one would get a large
artifact, as in \cref{fig_artifact}.
\begin{figure}
  \centering
  \includegraphics[width=\textwidth]{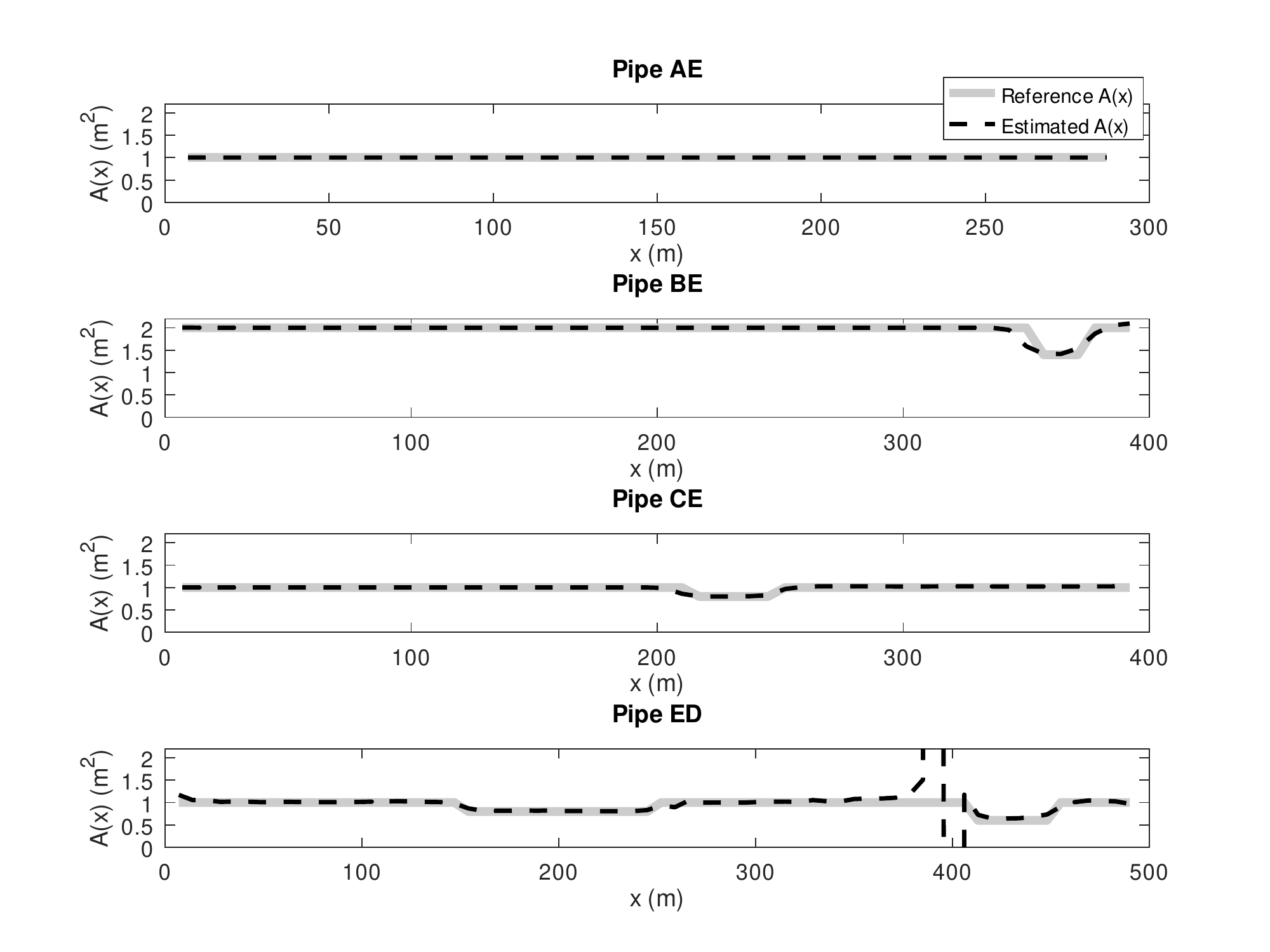}
  \caption{Solving without regularization}
  \label{fig_artifact}
\end{figure}
The reasons for using imperfect measurements are to demonstrate the
algorithm's stability. We could have calculated the impulse-response
matrix using the more exact method of characteristics. However in
reality, when dealing with data from actual measurement sensors, one
would never get such perfect inputs to the inversion algorithm. First
of all there are modelling errors, and secondly, measurement noise. By
using FDTD with a small Courant number and also by inputting
measurements with a rougher discretization than in the direct model we
purposefully seek to avoid inverse crimes, as decribed in Section~1.2
of \cite{KaipioSomersalo}.

An inverse crime happens when the measurement data is simulated with
the same model as the inversion algorithm assumes. Typically in these
cases the numerical reconstruction looks unrealistically good, even
with added measurement noise, and therefore does not reflect the
method's actual performance in real life, where models are always
approximations.

Several directions of investigation still remain open. A more in-depth
numerical study should be concluded, with proper statistical analysis
and for example finding the signal to noise ratio that still gives
meaningful reconstructions. There is also the issue of measuring the
impulse-response matrix. It would be very much appreciated if there
was no need to actually close almost all the valves in the network to
perform measurements. Hence one could investigate various other types
of boundary measurements and see how to process them to reveal the
impulse-response matrix used here. To make even further savings in
assessing the water supply network's condition, one could also try
implementing the theory from \cite{Kurasov--tree} as an algorithm.
Their theoretical result only requires the so-called ``backscattering
data'' from the network: instead of having to measure the full
impulse-response matrix $(K_{ij})_{i,j=1}^N$ it would be enough to
measure its diagonal $(K_{ii})_{i=1}^N$. This means that the pressure
needs to be measured only at the same pipe end as the flow is
injected. So the exact same type of measurement as is done for a
single pipe, as in \cite{area1}, should be done at each accessible
network end. For a network with $N+1$ ends this translates into $N$
measurements compared to the $N^2$ for the full IRM.

Lastly we comment on our algorithm. What is interesting about it, is
that there is no need to solve the area in the whole network at the
same time. To save on computational costs one could for example
reconstruct the cross-sectional area only in a small region of
interest in a single pipe even when the network is otherwise quite
large. Alternatively one could parallelize the process and solve for
the whole network very fast using multiple computational cores. These
would likely be impossible when having only the backscattering data
described above.

The steps in \cref{alg1,alg2} describe the reconstruction process in
detail. In essence only a single matrix inversion (or least squares or
other minimization) is required for solving the area at any given
point in the network. The size of this matrix depends on how finely
the impulse-response matrix has been discretized, and how far from the
boundary this point is. The rougher the discretization, the faster
this step. However having a poor discretization leads to a larger
numerical error, and one might then need to use a regularization
scheme. All the numerical examples in this article were performed with
an office laptop from 2008, and they took a few minutes to calculate
including simulating the measurements. The algorithm is
computationally light, simple to implement, and thus suitable for
practical applications.

\section{Acknowledgements}
This research was partly funded by the fullowing grants.
\begin{itemize}
\item T21-602/15R Smart Urban water supply systems(Smart UWSS), and
\item 16203417 Blockages in water pipes: theoretical and experimental
  study of wave-blockage interaction and detection.
\end{itemize}
Furthermore all authors declare that there are no conflicts of
interest in this paper.

\section{Appendix}
\subsection{Existence of a solution} We give a list of the steps
involved in proving that the equation in \cref{treeEquationForH1} has
a solution when the impulse-response matrix has been measured exactly
and there is no modelling errors. Since the ultimate goal for our work
is on assessing the quality of water supply network pipes, in the end
there will be both modelling errors and measurement noise, so we will
not give a complete formal proof. However we give enough details so
that any mathematician specialized in the wave equation can fill the
gaps after choosing suitable function space classes and assumptions
for the various objects.
\begin{enumerate}
\item\label{a} Given $p\in\mathbb G \setminus \mathbb J$ which defines
  the admissible set $D_p$ and action time $f$ according to
  \cref{admissibleSet}, and with $\tau > \max f$ and $h_0>0$, we want
  to solve
  \begin{align*}
    h_0 &= \frac{a}{A(x_j)}\nu(x_j) Q_p(t,x_j) \\ &\qquad + \sum_{x_i
      \in \partial\mathbb G \setminus \mathbb J} \frac{\nu(x_i)}{2}
    \int_0^\tau Q_p(s,x_i) \big( k_{ij}(\abs{t-s}) +
    k_{ij}(2\tau-t-s)\big) ds
  \end{align*}
  when $\tau-f(x_j) < t \leq \tau$, $x_j \in \partial \mathbb G
  \setminus \{x_0\}$, and $Q_p(s,x_j) = 0$ when $t \leq \tau -
  f(x_j)$, $x_j\in\mathbb G \setminus \{x_0\}$.
\item\label{b} We extend boundary data from the interval $(0,\tau)$ to
  $(-\infty,+\infty)$ and also absorb the interior boundary normal, by
  writing
  \[
  \mathscr Q_p(t,x_j) = \begin{cases} \nu(x_j) Q_p(t,x_j), &0\leq
    t\leq\tau,\\ \nu(x_j) Q_p(2\tau-t,x_j), & \tau < t \leq 2\tau,
    \\ 0, &\text{otherwise}. \end{cases}
  \]
\item\label{c} Solving
  \begin{align*}
    \tilde h_0(t,x_j) &= \frac{a}{A(x_j)} \mathscr Q_p(t,x_j)
    \\ &\qquad+ \sum_{x_i \in \partial\mathbb G \setminus \mathbb
      J} \frac{\chi(t,x_j)}{2} \int_0^{2\tau} \mathscr Q_p(s,x_i)
    k_{ij}(\abs{t-s}) ds
  \end{align*}
  in
  \begin{align*}
    L^2_0 &= \Big\{ F \in L^2(\mathbb R \times \partial\mathbb G
    \setminus \{x_0\}) \,\Big|\, \\&\qquad F(t,x_j) = 0 \text{ if }
    \abs{t-\tau}\geq f(x_j),\, F(2\tau-t,x_j) = F(t,x_j) \Big\}
  \end{align*}
  where
  \[
  \chi(t,x_j) = \begin{cases} 1, & \abs{t-\tau}<f(x_j),\\ 0, &
    \abs{t-\tau}\geq f(x_j), \end{cases} \qquad \tilde h_0(t,x_j) =
  h_0 \chi(t,x_j)
  \]
  is equivalent to solving the equation of \cref{a} in the
  corresponding $L^2$-based space. We equip $L^2_0$ with the inner
  product
  \[
  \langle A, B \rangle = \sum_{x_j\in\partial\mathbb G \setminus
    \mathbb J} \int_0^{2\tau} A(t,x_j) \overline{B(t,x_j)} dt
  \]
  where the complex conjugation can be ignored because all of our
  numbers are real.
\item\label{d} The equation in \cref{c} can be written as $\tilde h_0
  = \mathscr K \mathscr Q_p$ in $L^2_0$. Here we define the operator
  $\mathscr K$ by
  \begin{align*}
    \mathscr K F (t,x_j) &= \frac{a}{A(x_j)} F(t,x_j) \\&\qquad +
    \sum_{x_i \in \partial\mathbb G \setminus \mathbb J}
    \frac{\chi(t,x_j)}{2} \int_0^{2\tau} F(s,x_i) k_{ij}(\abs{t-s}) ds
  \end{align*}
  and it is a well-defined operator if $k_{ij}$ is a distribution of
  order $0$ which is the case if the cross-sectional area function $A$
  is piecewise smooth for example. It indeed maps $L^2_0 \to L^2_0$
  satisfying the support and time-symmetry conditions.
\item\label{e} We will show that $\mathscr K$ is a Fredholm operator
  $L^2_0\to L^2_0$ that is positive semidefinite.
\item\label{f} Recall the travel-time function $TT$, the action time
  function $f$ and the admissible set $D_p$ from \cref{a}. Define
  \begin{align*}
    \Omega &= \Big\{(t,x) \in \mathbb R \times \mathbb G \,\Big|\,
    \\&\qquad TT(x,x_j) + \abs{t-\tau} < f(x_j) \text{ for some }
    x_j \in \partial\mathbb G \setminus \{x_0\} \Big\}, \\ t_\pm(x) &=
    \tau \pm \max_{x_j\in\partial\mathbb G\setminus\{x_0\}} \big(
    f(x_j) - TT(x,x_j) \big).
  \end{align*}
  The set $\Omega$ coincides with the set where unique continuation
  from the boundary holds, as in \cref{treeUniqCont}. One can show
  that
  \[
  \Omega = \{(t,x)\in \mathbb R \times \mathbb G \mid x\in D_p, t_-(x)
  < t < t_+(x) \}.
  \]
\item\label{g} Let $F\in L^2_0$ be given and fixed. Let $H,Q:\mathbb R
  \times \mathbb G \to \mathbb R$ satisfy the network wave model of
  \cref{model} with boundary flow $F$.
\item\label{h} We see that $H(t,x) = Q(t,x) = 0$ when $x\in D_p$ and
  $t\leq t_-(x)$. This follows from the zero initial conditions,
  finite speed of wave propagation and the definition of $t_\pm$ and
  the action time function $f$.
\item\label{i} Define
  \[
  S = \frac{\sqrt{g A}}{a} H + \frac{\mu}{\sqrt{g A}} Q
  \]
  where $\mu(x)=+1$ if the positive direction of the coordinates at
  $x$ points towards $p$, and $\mu(x)=-1$ otherwise. Then one sees
  that
  \[
  \big( \partial_x - \tfrac{\mu}{a} \partial_t \big)(QH) = -
  \frac{1}{2} \partial_t (S^2)
  \]
  in $t\in\mathbb R$, $x\in \mathbb G \setminus \mathbb J$.
\item\label{j} By \cref{f,h,i} we see that
  \begin{align*}
    -\frac{1}{2} \int_\Omega \partial_t (S^2) dt dx &= -\frac{1}{2}
    \int_{D_p} \big(S^2(t_+(x),x) - S^2(t_-(x),x)\big) dx \\&=
    -\frac{1}{2} \int_{D_p} S^2(t_+(x),x) dx \leq 0.
  \end{align*}
\item\label{k} By \cref{i,j} and the divergence theorem we have
  \begin{align*}
    0 &\geq \int_\Omega \big( \partial_x - \tfrac{\mu}{a} \partial_t
    \big)(QH) dt dx = \int_\Omega \nabla_{t,x} \cdot \big(
    -\tfrac{\mu}{a} QH, QH \big) dt dx \\&= \int_{\partial\Omega}
    \nu_{t,x} \cdot \big(-\tfrac{\mu}{a},1\big) QH d\sigma(t,x)
  \end{align*}
  where $\nu_{t,x}$ is the external unit normal vector to $\Omega$ at
  $(t,x) \in \partial\Omega$.
\item\label{l} Let us split $\partial\Omega$ next. By \cref{f} we see
  that
  \begin{align*}
    \partial\Omega &= \{ (t,x_j) \mid x_j\in\partial\mathbb
    G\setminus\{x_0\}, \abs{t-\tau}\leq f(x_j) \} \\ &\qquad \cup
    \{(t,x) \mid x\in D_p, t=t_+(x) \} \cup \{(t,x) \mid x\in D_p,
    t=t_-(x) \} \\ & \qquad \cup \{(\tau,p)\}.
  \end{align*}
\item\label{m} On the first set in \cref{l} the external unit normal
  $\nu_{t,x_j}$ is given by $\nu_{t,x_j} = (0,-\nu(x_j))$ because
  $\nu$ was defined as the internal normal at the pipe ends.
\item\label{n} Consider the second set in \cref{l}. On each individual
  pipe segment $P \subset D_p$ the map $x \mapsto t_+(x)$ is linear
  (affine). How does $t_+$ change when we go from $x$ to $x+\Delta x$?
  Recall that $\abs{\Delta x} = a \abs{\Delta t}$, and since
  $(t_+(x),x)$ follows the characteristics, so $t_+(x+\Delta x) =
  t_+(x) \pm \Delta x / a$. Its value decreases if $\Delta x > 0$ and
  the positive direction of the coordinates $x$ point towards
  $p$. Both of these follow from \cref{f} and the definition of the
  action time function $f$. By the definition of $\mu$ in \cref{i} we
  see that
  \[
  \frac{\Delta x}{t_+(x+\Delta x) - t_+(x)} = - \frac{a}{\mu}.
  \]
  Thus the normal has a slope of $\mu/a$ and so
  \[
  \nu_{t,x} = \frac{(a,\mu)}{\sqrt{a^2+\mu^2}} =
  \frac{(a,\mu)}{\sqrt{1+a^2}}.
  \]
\item\label{o} By \cref{g,h,j,k,l,m,n} we get
  \begin{align*}
    0 &\geq -\frac{1}{2} \int_{D_p} S^2(t_+(x),x) dx =
    \int_{\partial\Omega} \nu_{t,x} \cdot \big(-\tfrac{\mu}{a},1\big)
    QH d\sigma(t,x) \\ &= \sum_{x_j\in\partial\mathbb G\setminus
      \mathbb J} \int_{\tau - f(x_j)}^{\tau+f(x_j)} -F(t,x_j) H(t,x_j)
    dt \\ &\qquad + \int_{D_p} \frac{(a,\mu)}{\sqrt{1+a^2}} \cdot
    \big( -\tfrac{\mu}{a}, 1 \big) (QH)(t_+(x),x) dx \\&\qquad +
    \int_{D_p} \nu_{t,x} \cdot \big(-\tfrac{\mu}{a}, 1\big)
    (QH)(t_-(x),x) dx + 0
  \end{align*}
  which gives
  \[
  \sum_{x_j\in\partial\mathbb G\setminus \mathbb J} \int_0^{2\tau}
  F(t,x_j) H(t,x_j) dt = \frac{1}{2} \int_{D_p} S^2(t_+(x),x) dx \geq
  0
  \]
  by the support condition of $F\in L^2_0$ given in \cref{c}. In
  detail we see that the $D_p$-integrals vanish because of the
  following. Firstly we see that $(a,\mu) \cdot (-\mu/a,1) = 0$ making
  the first integral over $D_p$ vanish. Secondly, by \cref{h},
  $(QH)(t_-(x),x)=0$ for $x\in D_p$. The integral over the singleton
  $(t,s) = (\tau, p)$ is zero because $QH$ is a function since $F$ is
  too.
\item\label{p} Recall \cref{HfromIRM} and the splitting of the IRM $K$
  into the impulse and response matrices in
  \cref{treeEquationForH1}. Thus
  \[
  H(t,x_j) = \frac{a}{A(x_j)g} F(t,x_j) + \sum_{x_i\in\partial\mathbb
    G\setminus \{x_0\}} \int_0^t F(s,x_i) k_{ij}(t-s) ds.
  \]
\item\label{q} Combining \cref{o,p} and changing the order of
  integration in double integrals, we arrive at
  \[
  \langle \mathscr K F, F \rangle = \frac{1}{2} \int_{D_p}
  S^2(t_+(x),x) dx \geq 0
  \]
  for arbitrary $F\in L^2_0$ and where $S,H,Q$ are defined by
  \cref{g,i}.
\item\label{r} We cannot show that $\mathscr K F = 0$ implies $F=0$
  unless the network is a single pipe. For a counter example choose a
  three-pipe network with constant unit area and wave speed, and each
  pipe is of unit length. Then send a pulse from one end and the same
  with opposite sign from another end. These pulses meet at the
  junction at the exact same time and cancel out the pressure there,
  so nothing propagates to the third pipe. Then they continue on the
  original two pipes until they hit the boundaries. The boundary flow
  $F$ should be then chosen so that these pulses are absorbed
  completely instead of reflecting back. This boundary flow $F$ gives
  $\langle \mathscr K F, F \rangle = 0$ by \cref{q}, but $F\neq0$.
\item\label{s} The existence of a solution follows from the following:
  that $\mathscr K$ is self-adjoint and Fredholm, and that $\tilde h_0
  \perp \ker \mathscr K $, so then there is $F$ giving $\mathscr K F =
  \tilde h_0$. We shall show these next.
\item\label{t} If we assume that $A(x)$ is smooth enough, then the
  components of the response matrix $(k_{ij})_{i,j=1}^N$ have at most
  a finite number of delta-functions arising from the junctions of
  $\mathbb G$ (in the time-interval ${({0,2\tau})}$), and are
  otherwise a function. Hence $\mathscr K$ can be split into a
  multiplication by nonvanishing functions, a finite number of
  translations that are also multiplied by such functions, and a
  smoothing integral operator which is compact $L^2_0\to L^2_0$. The
  identity and translations multiplied by non-vanishing functions are
  Fredholm operators, and the rest is compact. Hence the whole
  $\mathscr K$ is Fredholm. The space $L^2_0$ is Hilbert, and
  $\mathscr K$ is easily seen to be symmetric there because $k_{ij} =
  k_{ji}$. It is a bounded operator. Hence it is self-adjoint. Thus
  $\mathscr K$ is a self-adjoint Fredholm operator.
\item\label{u} Let $w(s) = \int_{\mathbb G} H(s,x) \frac{g A}{a^2} dx$
  where $H,Q$ are given by \cref{g}. By differentiating with respect
  to $s$, using \cref{WH1}, calculating the $x$-integral, and
  integrating with respect to $s$ we see that
  \[
  \int_{D_p} H(\tau,x) \frac{g A}{a^2} dx = w(\tau) =
  \sum_{x_j\in\partial\mathbb G \setminus\{x_0\}} \int_0^\tau \nu(x_j)
  Q(s,x_j) ds
  \]
  because $Q=0$ near the boundary point $x_0$ up to time $\tau$. This
  is the same as in \cref{impedanceByBoundaryData}.
\item\label{v} Let $F\in L^2_0$ with $\mathscr K F = 0$ and $H,Q$ as
  in \cref{g}. Set $h_0=0$ in \cref{treeEquationForH1} to conclude
  that $H(\tau,x)=0$ for $x\in\mathbb G$. Recall that the equations in
  that theorem are formulated using $\mathscr K$ here (see \cref{c}).
\item\label{w} The time-symmetry of $F\in L^2_0$ gives the first
  equality below. \cref{u,v} give the second and third equality,
  respectively. So
  \[
  \langle \tilde h_0, F \rangle = \sum_{x_j\in\partial\mathbb G
    \setminus\{x_0\}} 2 h_0 \int_0^\tau F(s,x_j) ds = 2 h_0 \int_{D_p}
  H(\tau,x) \frac{g A}{a^2} dx = 0.
  \]
  Hence $\tilde h_0 \perp \ker\mathscr K = \ker\mathscr K^\ast$, the
  latter because of self-adjointness, so
  \[
  \tilde h_0 \in (\ker\mathscr K^\ast)^\perp =
  \overline{\operatorname{ran} \mathscr K} = \operatorname{ran}
  \mathscr K
  \]
  because the range of a Fredholm operator is closed. In other words
  there is $F\in L^2_0$ such that $\mathscr K F = \tilde h_0$, and
  thus there is a solution to the equations in
  \cref{treeEquationForH1}.
\end{enumerate}

\subsection{Numerical reconstruction algorithm}
We describe here the numerical implementation of \cref{alg2}. Its
inputs include \verb#tHist#, \verb#K#, the discretized time-variable
vector and cell containing the discretized components of the response
matrix $k=(k_{ij})_{i,j=1}^N$ and \verb#f# which is a vector modeling
the discretized action time function. Other inputs are \verb#a0#,
\verb#A0#, which are vectors containing the wave speed and
cross-sectional area at the boundary points of the network, and
\verb#tau#, \verb#g#, \verb#reguparam# are scalars representing
$\tau$, the constant of gravity $g$, and the Tikhonov regularization
parameter used in the inversion of \cref{eq:solveQ}. Its output,
\verb#Qtau#, is a cell whose components are discretizations of
$t\mapsto Q_p(t,x_j)$ that solved \cref{eq:solveQ} for the various
boundary points $x_j$.
\begin{lstlisting}
  function Qtau = makeHeq1(tHist, K, tau, f, a0, g, A0, reguparam)

  dt = tHist(2)-tHist(1);
  nBndryPts = size(K,1);
\end{lstlisting}

We write first the $N$ (\verb#=nBndryPts#) integral equations
containing the $N$ response function components. This shall be indexed
by \verb#j# (pipe end where pressure measured) and \verb#i# (pipe end
where pulse sent from). Recall the equation,
\[
h_0 = \frac{a}{A(x_j)g} q(t,x_j) + \sum_{i=1}^N \frac12 \int_0^\tau
q(t,x_i) ( k_{ij}(\abs{t-s}) + k_{ij}(2\tau-t-s) ) ds
\]
for $j=1,\ldots,N$ and $\tau-f(x_j) < t \leq \tau$. But recall that we
must still set $q(t,x_j) = 0$ when $t\leq \tau-f(x_j)$. Our numerical
implementation assumes that $h_0=1$.

We will write the equation above as \verb#H*q = RHS# where $q$ is a
block vector where each block is indexed by $i$ and
\[
q^i = [q(dt,x_i); q(2\,dt,x_i); ... ; q(M\,dt,x_i)]
\]
with $M = \lfloor{\tau/dt}\rfloor$. Each component of \verb#RHS# is
either $h_0=1$ or $0$. We do a piecewise constant approximation of all
of these functions and equations. We discretize \verb#t=l*dt#,
\verb#s=k*dt# with \verb#l,k=1,...,M#.
\begin{lstlisting}[firstnumber=last]
  M = floor(tau/dt); % Number of time-steps of length dt in (0,tau).
  % Discard unneeded measurements:
  tHist = tHist(1:2*M);
  for i = 1:nBndryPts
    for j = 1:nBndryPts
      K{i,j} = K{i,j}(1:2*M);
    end
  end
  tVec = (1:M)*dt; lVec = 1:M;
  sVec = (1:M)*dt; kVec = 1:M;
  [t,s] = ndgrid(tVec,sVec);
  [l,k] = ndgrid(lVec,kVec);
\end{lstlisting}
Set the tolerance for floating point numbers next. If two numbers are
at most tolerance from each other, then they are considered the
same. Without doing this, some numerical errors might happen when
comparing two floating point numbers that are close to each
other. This would cause errors of the order of $dx$ in the area
reconstruction. We can now start discretizing the integral equation.
\begin{lstlisting}[firstnumber=last]
  tol = dt/4;
  Mij = nan(M,M); % Block (j,i) of matrix H
  H = nan(nBndryPts*M, nBndryPts*M); % The linear operator
  h0j = nan(M,1); % Block of RHS corresponding to j
  RHS = nan(nBndryPts*M,1); % The RHS vector having 1's or 0's
  for j = 1:nBndryPts
    h0j = ones(M,1);
    h0j((tVec - (tau - f(j)) <= tol)) = 0;
    RHS( (j-1)*M + (1:M) ) = h0j;
    for i = 1:nBndryPts
      % Use 2*M-(l-1)-(k-1)=2*M+2-l-k for the time-reversal.
      Mij = 0.5 * dt * ( K{i,j}(1+abs(l-k)) + K{i,j}(2*M+2-l-k) );
\end{lstlisting}
Recall that $q(s,x_i)$ must be zero when $s \leq \tau-f(x_i)$. Hence
the columns of the matrix that hit the indices corresponding to
$s\leq\tau-f(x_i)$ should be zero. Similarly, when $t \leq \tau -
f(x_j)$ we want the equation to give $q(t,x_j) = 0$, so the part
coming from the integral must be zeroed. Then add the
``identity''-type part of the integral equation, and save the block
into the final matrix.
\begin{lstlisting}[firstnumber=last]
      Mij((s - (tau - f(i)) <= tol)) = 0;
      Mij((t - (tau - f(j)) <= tol)) = 0;
      if i==j
        Mij = Mij + a0(j)/(A0(j)*g) .* eye(M);
      end
      H( (j-1)*M + (1:M), (i-1)*M + (1:M) ) = Mij;
    end
  end
\end{lstlisting}
Next, we solve the integral equation for $q$ using Tikhonov
regularization. The simplest way would be to set\\
\hspace*{\stretch{1}}{\scriptsize
  \verb#q = [H; sqrt(reguparam)*eye(size(H))] \ [RHS; zeros(size(RHS))];#}\hspace*{\stretch{1}}\\ but
it is unnecessarily slow. For a faster solution we first remove all
the rows and columns which would look like $0=0$. The vector \verb#nz#
shows the indices that are not forced to be zero, i.e. those where
$\tau - f(x_i) < s$. Then we remove the corresponding rows and columns
from $H$. Then solve the equation.
\begin{lstlisting}[firstnumber=last]
  nz = false(nBndryPts*M,1);
  for i=1:nBndryPts
    nz( (i-1)*M + (1:M) ) = ((sVec - (tau - f(i))) > tol);
  end
  H = H(nz, nz);
  qnz = [H; sqrt(reguparam)*eye(size(H))] ...
    \ [RHS(nz); zeros(size(RHS(nz)))];
  q = zeros(nBndryPts*M,1);
  q(nz) = qnz;
\end{lstlisting}
Finally split the vector \verb#q# into a cell whose components
correspond to the boundary flows at the different boundary points.
\begin{lstlisting}[firstnumber=last]
  Qtau = cell(nBndryPts, 1);
  for i=1:nBndryPts
    Qtau{i} = q( (i-1)*M + (1:M) );
  end

  end
\end{lstlisting}

\makeatletter
\renewcommand{\@biblabel}[1]{[#1]}
\makeatother
\bibliography{./network_area_reconstruction} 

\begin{thebibliography}{10}
\providecommand{\url}[1]{\texttt{#1}}
\providecommand{\urlprefix}{URL }
\expandafter\ifx\csname urlstyle\endcsname\relax
  \providecommand{\doi}[1]{doi:\discretionary{}{}{}#1}\else
  \providecommand{\doi}{doi:\discretionary{}{}{}\begingroup
  \urlstyle{rm}\Url}\fi

\bibitem{Ghidaoui2005}
Ghidaoui M~S, Zhao M, McInnis D~A, et~al. (2005) A review of water hammer
  theory and practice.
\newblock \emph{Applied Mechanics Reviews} 58: 49--76.
\newblock \doi{10.1115/1.1828050}.

\bibitem{Wylie1993}
Wylie E~B, Streeter V~L, Suo L (1993) \emph{Fluid transients in systems}.
\newblock 6th edition. Prentice Hall.
\newblock ISBN 978-0-1332-2173-2.

\bibitem{area1}
Zouari F, Bl{\aa}sten E, Louati M, et~al. (2019) Internal pipe area
  reconstruction as a tool for blockage detection.
\newblock \emph{Journal of Hydraulic Engineering} 145: 04019019.
\newblock \doi{10.1061/(ASCE)HY.1943-7900.0001602}.

\bibitem{Tolstoy2010}
Tolstoy A~I (2010) Waveguide monitoring (such as sewer pipes or ocean zones)
  via matched field processing.
\newblock \emph{The Journal of the Acoustical Society of America} 128:
  190--194.
\newblock \doi{10.1121/1.3436521}.

\bibitem{Jing2018}
Jing L, Wang W, Li Z, et~al. (2018) Detecting impedance and shunt conductance
  faults in lossy transmission lines.
\newblock \emph{IEEE Transactions on Antennas and Propagation} .

\bibitem{Sondhi--Gopinath}
Sondhi M~M, Gopinath B (1971) Determination of vocal tract shape from impulse
  response at the lips.
\newblock \emph{The Journal of the Acoustical Society of America} 49:
  1867--1873.
\newblock \doi{10.1121/1.1912593}.

\bibitem{Bruckstein1985}
Bruckstein A~M, Levy B~C, Kailath T (1985) Differential methods in inverse
  scattering.
\newblock \emph{SIAM Journal on Applied Mathematics} 45: 312--335.
\newblock \doi{10.1137/0145017}.

\bibitem{Belishev-network-paper1}
Belishev M~I (2004) Boundary spectral inverse problem on a class of graphs
  (trees) by the {BC} method.
\newblock \emph{Inverse Problems} 20: 647--672.
\newblock ISSN 0266-5611.
\newblock \doi{10.1088/0266-5611/20/3/002}.

\bibitem{Belishev-network-paper2}
Belishev M~I, Vakulenko A~F (2006) Inverse problems on graphs: recovering the
  tree of strings by the {BC}-method.
\newblock \emph{J Inverse Ill-Posed Probl} 14: 29--46.
\newblock ISSN 0928-0219.
\newblock \doi{10.1163/156939406776237474}.

\bibitem{Kurasov--tree}
Avdonin S, Kurasov P (2008) Inverse problems for quantum trees.
\newblock \emph{Inverse Probl Imaging} 2: 1--21.
\newblock ISSN 1930-8337.
\newblock \doi{10.3934/ipi.2008.2.1}.

\bibitem{Baudouin--Yamamoto}
Baudouin L, Yamamoto M (2015) Inverse problem on a tree-shaped network: unified
  approach for uniqueness.
\newblock \emph{Appl Anal} 94: 2370--2395.
\newblock ISSN 0003-6811.
\newblock \doi{10.1080/00036811.2014.985214}.

\bibitem{LeafPeeling}
Avdonin S~A, Mikhaylov V~S, Nurtazina K~B (2015) On inverse dynamical and
  spectral problems for the wave and {S}chr\"{o}dinger equations on finite
  trees. {T}he leaf peeling method.
\newblock \emph{Zap Nauchn Sem S-Peterburg Otdel Mat Inst Steklov (POMI)} 438:
  7--21.
\newblock ISSN 0373-2703.
\newblock \doi{10.1007/s10958-017-3388-2}.

\bibitem{Belishev--loops}
Belishev M~I, Wada N (2009) On revealing graph cycles via boundary
  measurements.
\newblock \emph{Inverse Problems} 25: 105011, 21.
\newblock ISSN 0266-5611.
\newblock \doi{10.1088/0266-5611/25/10/105011}.

\bibitem{Oksanen}
Oksanen L (2011) Solving an inverse problem for the wave equation by using a
  minimization algorithm and time-reversed measurements.
\newblock \emph{Inverse Probl Imaging} 5: 731--744.
\newblock ISSN 1930-8337.
\newblock \doi{10.3934/ipi.2011.5.731}.

\bibitem{Oksanen--regularization}
Korpela J, Lassas M, Oksanen L (2016) Regularization strategy for an inverse
  problem for a {$1+1$} dimensional wave equation.
\newblock \emph{Inverse Problems} 32: 065001, 24.
\newblock ISSN 0266-5611.
\newblock \doi{10.1088/0266-5611/32/6/065001}.

\bibitem{area3}
Zouari F, Louati M, Bl{\aa}sten E, et~al. (2019) Multiple defects detection and
  characterization in pipes.
\newblock In \emph{Proceedings of the 13th International Conference on Pressure
  Surges}.

\bibitem{Courant--Hilbert2}
Courant R, Hilbert D (1962) \emph{Methods of mathematical physics. {V}ol. {II}:
  {P}artial differential equations}.
\newblock (Vol. II by R. Courant.). Interscience Publishers (a division of John
  Wiley \& Sons), {New York}-{London}.

\bibitem{Karney--McInnis}
Karney B~W, McInnis D (1992) Efficient calculation of transient flow in simple
  pipe networks.
\newblock \emph{Journal of Hydraulic Engineering} 118: 1014--1030.
\newblock \doi{10.1061/(ASCE)0733-9429(1992)118:7(1014)}.

\bibitem{octave}
Eaton J~W, Bateman D, Hauberg S, et~al. (2018) \emph{{GNU Octave} version 4.4.0
  manual: a high-level interactive language for numerical computations}.
\newblock \urlprefix\url{https://www.gnu.org/software/octave/doc/v4.4.1/}.

\bibitem{KaipioSomersalo}
Kaipio J, Somersalo E (2005) \emph{Statistical and computational inverse
  problems}, volume 160 of \emph{Applied Mathematical Sciences}.
\newblock Springer-Verlag, New York.
\newblock ISBN 0-387-22073-9.

\bibitem{Gong2014}
Gong J, Lambert M~F, Simpson A~R, et~al. (2014) Detection of localized
  deterioration distributed along single pipelines by reconstructive {MOC}
  analysis.
\newblock \emph{Journal of Hydraulic Engineering} 140: 190--198.
\newblock \doi{10.1061/(ASCE)HY.1943-7900.0000806}.

\end{thebibliography}
\bibliographystyle{mine}

\end{document}